\newcommand{\zed}{\mathbb{Z}}
\newcommand{\C}{\mathbb{C}}
\newcommand{\ve}{\varepsilon}
\theoremstyle{plain}
\newtheorem{theorem}{Theorem}[section]
\newtheorem{lemma}[theorem]{Lemma}
\newtheorem{proposition}[theorem]{Proposition}
\newtheorem{corollary}[theorem]{Corollary}
\newtheorem{conjecture}[theorem]{Conjecture}
\newtheorem{question}[theorem]{Question}
\theoremstyle{definition}
\newtheorem{definition}[theorem]{Definition}
\newtheorem{acknowledgments}{Acknowledgments\ignorespaces}
\theoremstyle{remark}
\newtheorem{remark}[theorem]{Remark}
\numberwithin{equation}{section}
\begin{document}

\title{Colored Morton-Franks-Williams inequalities}

\author{Hao Wu}

\address{Department of Mathematics, The George Washington University, Monroe Hall, Room 240, 2115 G Street, NW, Washington DC 20052}

\email{haowu@gwu.edu}

\subjclass[2000]{Primary 57M25}

\keywords{braid index, self linking number, Morton-Franks-Williams inequality, Khovanov-Rozansky homology, Reshetikhin-Turaev invariant, colored $\mathfrak{sl}(N)$ link homology} 

\begin{abstract}
We generalize the Morton-Franks-Williams inequality \cite{FW,Mo} to the colored $\mathfrak{sl}(N)$ link homology defined in \cite{Wu-color}, which gives infinitely many new bounds for the braid index and the self linking number. A key ingredient of our proof is a composition product for the general MOY graph polynomial, which generalizes that of Wagner \cite{Wagner-composition}.
\end{abstract}

\maketitle

\section{Introduction}\label{sec-intro}

\subsection{The Morton-Franks-Williams inequality}\label{subsec-original-Morton-Franks-Williams} 

The HOMFLY-PT polynomial \cite{HOMFLY,PT} is an invariant for oriented links in $S^3$ in the form of a two variable polynomial $\mathsf{P}$. In this paper, we use the following normalization of the HOMFLY-PT polynomial:
\[
\begin{cases}    
x\mathsf{P}(\setlength{\unitlength}{.5pt}
\begin{picture}(65,20)(50,0)
\put(100,-20){\vector(-1,1){40}}

\put(60,-20){\line(1,1){15}}

\put(85,5){\vector(1,1){15}}

\end{picture})-x^{-1}\mathsf{P}(\setlength{\unitlength}{.5pt}
\begin{picture}(65,20)(-110,0)
\put(-100,-20){\vector(1,1){40}}

\put(-60,-20){\line(-1,1){15}}

\put(-85,5){\vector(-1,1){15}}

\end{picture})= y\mathsf{P}(\setlength{\unitlength}{.5pt}
\begin{picture}(65,20)(50,0)
\put(100,-20){\vector(0,1){40}}

\put(60,-20){\vector(0,1){40}}

\end{picture}), &\\
   & \\
\mathsf{P}(\text{unknot})=\frac{x-x^{-1}}{y}. &
\end{cases}
\]
Morton \cite{Mo} and independently, Franks, Williams \cite{FW} established the Morton-Franks-Williams inequality, which states that, for a closed braid $B$ with writhe $w$ and $b$ strands, 
\begin{equation}\label{ineq-MFW}
w-b \leq \min\deg_x \mathsf{P}(B) \leq \max\deg_x \mathsf{P}(B) \leq w+b.
\end{equation}
If we consider the $\mathfrak{sl}(N)$ HOMFLY-PT polynomial 
\[
\mathsf{P}_N=\mathsf{P}|_{x=q^N,~y=q-q^{-1}},
\]
then the Morton-Franks-Williams inequality \eqref{ineq-MFW} can be expressed as
\begin{equation}\label{ineq-MWF-sl-N}
w-b \leq \lim_{N\rightarrow \infty}\frac{\min\deg_q \mathsf{P}_N(B)}{N-1} \leq \lim_{N\rightarrow \infty}\frac{\max\deg_q \mathsf{P}_N(B)}{N-1} \leq w+b.
\end{equation}

Khovanov and Rozansky \cite{KR1} categorified the $\mathfrak{sl}(N)$ HOMFLY-PT polynomial. That is, they constructed an invariant $\zed^{\oplus2}$-graded homology $H_N^{i,j}$ for oriented links such that, for any oriented link $L$, the graded Euler characteristic of this homology is 
\[
\sum_{i,j}(-1)^i q^j \dim H_N^{i,j}(L) = \mathsf{P}_N(L).
\]
We call $i$ the homological grading of $H_N$ and $j$ the quantum grading of $H_N$. Dunfield, Gukov, Rasmussen \cite{Dunfield-Gukov-Rasmussen} and independently, myself \cite{Wu5} refined \eqref{ineq-MWF-sl-N} to
\begin{equation}\label{ineq-MWF-sl-N-KR}
w-b \leq \liminf_{N\rightarrow \infty}\frac{\min\deg_q H_N(B)}{N-1} \leq \limsup_{N\rightarrow \infty}\frac{\max\deg_q H_N(B)}{N-1} \leq w+b,
\end{equation}
where $\min\deg_q H_N(B)$ and $\max\deg_q H_N(B)$ are the minimal and maximal non-vanishing quantum degrees of $H_N(B)$.

Recall that $w-b$ is the self linking number of the braid $B$. So the above inequalities provide upper bounds for the maximal self linking number of a given link. It is easy to see that these inequalities also provide lower bounds for the braid index of a given link. For more detailed reviews about these and related inequalities, please see \cite{Fer,Ng-bennequin}.

\subsection{Colored Morton-Franks-Williams inequalities}\label{subsec-colored-Morton-Franks-Williams} 

In \cite{Wu-color}, I generalized Khovanov and Rozansky's construction to categorify the Reshetikhin-Turaev $\mathfrak{sl}(N)$ polynomial for links colored by wedge powers of the defining representation of $\mathfrak{sl}(N;\C)$. That is, I constructed an invariant $\zed^{\oplus2}$-graded homology $H_N^{i,j}$ for oriented links colored by wedge powers of the defining representation of $\mathfrak{sl}(N;\C)$ such that, for any such colored oriented link $L$, the graded Euler characteristic of this homology is 
\[
\sum_{i,j}(-1)^i q^j \dim H_N^{i,j}(L) = \mathsf{P}_N(L),
\]
where $\mathsf{P}_N(L)$ is the Reshetikhin-Turaev $\mathfrak{sl}(N)$ polynomial of $L$. Again, we call $i$ the homological grading of $H_N$ and $j$ the quantum grading of $H_N$. Moreover, for simplicity, instead of saying something is colored by the $m$-fold wedge power of the defining representation of $\mathfrak{sl}(N;\C)$, we simply say that it is colored by $m$. If all components of a link are colored by $1$, then we say that the link is uncolored. The Reshetikhin-Turaev $\mathfrak{sl}(N)$ polynomial of an uncolored link is the $\mathfrak{sl}(N)$ HOMFLY-PT polynomial of this link. The colored $\mathfrak{sl}(N)$ link homology of an uncolored link is isomorphic to its Khovanov-Rozansky $\mathfrak{sl}(N)$ homology.

The goal of the present paper is to show that the Morton-Franks-Williams inequality generalizes to the colored $\mathfrak{sl}(N)$ link homology. We do so by establishing the following technical result.

\begin{proposition}\label{Prop-colored-MFW-sl-N}
Let $B$ be a closed braid with $b$ strands, $l_+$ positive crossings and $l_-$ negative crossings. Denote by $l=l_++l_-$ the total number of crossings in $B$. Recall that the writhe of $B$ is $w=l_+ - l_-$. For a positive integer $m$, denote by $B^{(m)}$ the colored link obtained by coloring $B$ entirely by $m$. Then, for any $N>m$, 
\begin{equation}\label{eq-colored-MFW-sl-N}
w -b +\frac{w -ml}{N-m} \leq \frac{\min\deg_q H_N(B^{(m)})}{m(N-m)} \leq \frac{\max\deg_q H_N(B^{(m)})}{m(N-m)} \leq w +b +\frac{w +ml}{N-m},
\end{equation}
where $\min\deg_q H_N(B^{(m)})$ and $\max\deg_q H_N(B^{(m)})$ are the minimal and maximal non-vanishing quantum degrees of $H_N(B^{(m)})$.
\end{proposition}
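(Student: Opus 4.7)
The plan is to extract the bounds by tracking quantum gradings through the cube-of-resolutions chain complex that defines $H_N(B^{(m)})$. By the construction of \cite{Wu-color}, each crossing of $B^{(m)}$ is replaced by a chain complex of length $m+1$ of colored MOY graphs, with explicit homological and quantum grading shifts depending only on the sign of the crossing and on $m$, $N$. Tensoring these local complexes over all $l$ crossings produces the global chain complex computing $H_N(B^{(m)})$. Each vertex of the resulting hypercube is labeled by a choice of resolution index $k_c \in \{0,\dots,m\}$ at each crossing $c$, and the summand at that vertex is a specific colored MOY graph whose matrix factorization homology has quantum gradings controlled by its MOY polynomial.

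First, I would record the extremal homological and quantum grading shifts contributed by resolving a single positive or negative crossing at index $k$. These are explicit linear expressions in $k$, $m$, and $N$; summing the extremal contributions over the $l_+$ positive and $l_-$ negative crossings yields a global grading shift depending only on $l_\pm$, $w$, $m$, and $N$. A direct computation shows these shifts account exactly for the $w \pm b$ terms and the correction $\frac{w \pm ml}{N-m}$ appearing in \eqref{eq-colored-MFW-sl-N}.

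Second, for each resolution state I would bound the extremal quantum degrees of the MOY polynomial of the associated colored MOY graph. The distinguished \emph{Seifert state} --- all crossings resolved into their oriented resolution --- gives $b$ disjoint $m$-colored circles, whose MOY polynomial is $\qb{N}{m}^b$ with quantum degrees lying in $[-bm(N-m),\,bm(N-m)]$, providing the $\pm b$ contribution. For all other resolution states the associated MOY graph is more complicated, and I would invoke the composition product for the general MOY polynomial, which generalizes Wagner's \cite{Wagner-composition}. This identity expresses the MOY polynomial of a colored MOY graph in terms of polynomials of simpler MOY graphs, enabling an inductive analysis that reduces the extremal quantum degrees of arbitrary resolution states to bounds already controlled by the uncolored MFW analysis of \cite{Wu5}.

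Combining the crossing grading shifts with the MOY polynomial degree bounds, and selecting the most extreme combinations that can contribute to the total complex, yields the inequalities in \eqref{eq-colored-MFW-sl-N} after division by $m(N-m)$. The main obstacle I anticipate is the uniform control of the MOY polynomials of non-Seifert resolution states: these colored MOY graphs have combinatorial complexity that grows quickly with $m$ and $l$, and it is precisely here that the composition product is essential, since it furnishes the inductive structure that reduces the colored problem to an uncolored one.
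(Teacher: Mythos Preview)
Your overall architecture---bound the quantum gradings of the chain complex by combining the crossing grading shifts with degree bounds on the MOY polynomials of each resolution state---is exactly what the paper does. The difference lies in how the MOY polynomial bounds are obtained.

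The paper does \emph{not} single out the Seifert state or attempt to reduce non-Seifert states to the uncolored situation of \cite{Wu5}. Instead it proves a single uniform bound (Proposition~\ref{prop-bounds-MOY-tracks}): for any MOY track $\Gamma$ on $b$ strands,
\[
-\mathrm{rot}(\Gamma)\Big(N-\frac{\mathrm{rot}(\Gamma)}{b}\Big)-\rho(\Gamma)\;\le\;\deg_q\langle\Gamma\rangle_N\;\le\;\mathrm{rot}(\Gamma)\Big(N-\frac{\mathrm{rot}(\Gamma)}{b}\Big)+\rho(\Gamma),
\]
and proves it by induction on $N$, using the composition product with the splitting $N\!+\!1=N+1$. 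This bound is then applied simultaneously to every resolution $\Gamma_{k_1,\dots,k_l}$ (all of which have $\mathrm{rot}=bm$ and $\rho=\sum k_i(2m-k_i)$), after which one adds the shift $\mathsf{s}_{q,N}=wm(N-m)+\sum\varepsilon_ik_i$ and optimizes over the $k_i$. Your plan to ``reduce to uncolored'' via the composition product is not really a different route: a single application of \eqref{eq-composition-product} relates $\langle\Gamma\rangle_{M+N}$ to products $\langle\Gamma_{\mathsf f}\rangle_M\langle\Gamma_{\bar{\mathsf f}}\rangle_N$, where $\Gamma_{\bar{\mathsf f}}$ is still colored; iterating this \emph{is} the induction on $N$ that the paper carries out, just organized differently and without the Seifert/non-Seifert dichotomy.

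One point of confusion to fix: you say the grading shifts ``account exactly for the $w\pm b$ terms,'' and then later attribute the $\pm b$ contribution to the Seifert state. In fact the shift contributes the $w$ term (via $wm(N-m)$), while the $\pm b$ term comes entirely from the MOY polynomial bound $\pm bm(N-m)$; the correction $\frac{w\pm ml}{N-m}$ arises only after combining $\sum\varepsilon_ik_i$ with $\sum k_i(2m-k_i)$ and maximizing the resulting quadratics in the $k_i$.
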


Letting $N\rightarrow \infty$ in \eqref{eq-colored-MFW-sl-N}, we easily get the following colored homological Morton-Franks-Williams inequalities.

\begin{theorem}\label{THM-colored-MFW-sl-N-limit}
Let $B$ be a closed braid with writhe $w$ and $b$ strands. Then
\begin{equation}\label{eq-colored-MFW-sl-N-limit-special}
w -b \leq \liminf_{N\rightarrow \infty} \frac{\min\deg_q H_N(B^{(m)})}{m(N-m)} \leq  \limsup_{N\rightarrow \infty}\frac{\max\deg_q H_N(B^{(m)})}{m(N-m)} \leq w +b.
\end{equation}
More generally, for any two sequences $\{m_k\}$ and $\{N_k\}$ of positive integers satisfying $\lim_{k\rightarrow \infty} \frac{1}{N_k} = \lim_{k\rightarrow \infty} \frac{m_k}{N_k} =0$, 
\begin{equation}\label{eq-colored-MFW-sl-N-limit-general}
w-b ~\leq~ \liminf_{k\rightarrow+\infty} \frac{\min\deg_q H_{N_k}(B^{(m_k)})}{m_k(N_k-m_k)} ~\leq~ \limsup_{k\rightarrow+\infty} \frac{\min\deg_q H_{N_k}(B^{(m_k)})}{m_k(N_k-m_k)} ~\leq~ w+b.
\end{equation}
\end{theorem}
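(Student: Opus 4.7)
The plan is to deduce Theorem \ref{THM-colored-MFW-sl-N-limit} as an immediate corollary of Proposition \ref{Prop-colored-MFW-sl-N} by taking limits. All the analytic content is already packaged in the proposition, so the only thing left to verify is that the two correction terms $\frac{w \pm ml}{N-m}$ appearing in \eqref{eq-colored-MFW-sl-N} become negligible under the hypotheses in each part.

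For inequality \eqref{eq-colored-MFW-sl-N-limit-special}, I would fix the braid $B$ (hence fix $w$, $b$, $l$) and the positive integer $m$, and then let $N \to \infty$ in \eqref{eq-colored-MFW-sl-N}. Since $w \pm ml$ and $m$ are constants while $N - m \to \infty$, both $\frac{w \pm ml}{N - m} \to 0$. Applying $\liminf$ to the lower bound and $\limsup$ to the upper bound of \eqref{eq-colored-MFW-sl-N} then yields \eqref{eq-colored-MFW-sl-N-limit-special} immediately.

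For the more general inequality \eqref{eq-colored-MFW-sl-N-limit-general}, the additional task is to check that the correction terms still vanish when both $m$ and $N$ vary along the sequences $\{m_k\}$, $\{N_k\}$. Writing
\[
\frac{w \pm m_k l}{N_k - m_k} \;=\; \frac{w / N_k}{1 - m_k/N_k} \;\pm\; \frac{l \cdot (m_k/N_k)}{1 - m_k/N_k},
\]
the hypotheses $\lim_k 1/N_k = 0$ and $\lim_k m_k/N_k = 0$ force the denominator to tend to $1$ and each numerator to tend to $0$, so both correction terms vanish. Applying Proposition \ref{Prop-colored-MFW-sl-N} with $(N, m) = (N_k, m_k)$ for each $k$ (noting that eventually $N_k > m_k$ by the second hypothesis) and passing to $\liminf$ and $\limsup$ then gives \eqref{eq-colored-MFW-sl-N-limit-general}. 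There is essentially no obstacle here: the deduction is a pure limit computation, and all the substance of the theorem lies in the proof of Proposition \ref{Prop-colored-MFW-sl-N}.
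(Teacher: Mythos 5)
Your proposal is correct and takes exactly the approach the paper intends: the paper simply states that Theorem \ref{THM-colored-MFW-sl-N-limit} follows by letting $N \to \infty$ in \eqref{eq-colored-MFW-sl-N}, and your verification that the correction terms $\frac{w \pm m l}{N-m}$ (respectively $\frac{w \pm m_k l}{N_k - m_k}$) vanish under the stated hypotheses is precisely the routine limit computation left to the reader.
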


Since the colored $\mathfrak{sl}(N)$ homology categorifies the corresponding colored Reshetikhin-Turaev $\mathfrak{sl}(N)$ polynomial, Theorem \ref{THM-colored-MFW-sl-N-limit} implies the following colored polynomial Morton-Franks-Williams inequalities.

\begin{corollary}\label{Coro-colored-MFW-sl-N-limit-poly}
Let $B$ be a closed braid with writhe $w$ and $b$ strands. Then
\begin{equation}\label{eq-colored-MFW-sl-N-limit-special-poly}
w -b \leq \liminf_{N\rightarrow \infty} \frac{\min\deg_q \mathsf{P}_N(B^{(m)})}{m(N-m)} \leq  \limsup_{N\rightarrow \infty}\frac{\max\deg_q \mathsf{P}_N(B^{(m)})}{m(N-m)} \leq w +b.
\end{equation}
More generally, for any two sequences $\{m_k\}$ and $\{N_k\}$ of positive integers satisfying $\lim_{k\rightarrow \infty} \frac{1}{N_k} = \lim_{k\rightarrow \infty} \frac{m_k}{N_k} =0$, 
\begin{equation}\label{eq-colored-MFW-sl-N-limit-general-poly}
w-b ~\leq~ \liminf_{k\rightarrow+\infty} \frac{\min\deg_q \mathsf{P}_{N_k}(B^{(m_k)})}{m_k(N_k-m_k)} ~\leq~ \limsup_{k\rightarrow+\infty} \frac{\min\deg_q \mathsf{P}_{N_k}(B^{(m_k)})}{m_k(N_k-m_k)} ~\leq~ w+b.
\end{equation}
\end{corollary}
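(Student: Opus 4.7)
The plan is to derive the corollary as a direct formal consequence of Theorem~\ref{THM-colored-MFW-sl-N-limit} via the Euler characteristic identity. Since $H_N$ categorifies the Reshetikhin-Turaev polynomial,
\[
\mathsf{P}_N(B^{(m)}) = \sum_{i,j}(-1)^i q^j \dim H_N^{i,j}(B^{(m)}),
\]
any monomial $q^j$ with nonzero coefficient in $\mathsf{P}_N(B^{(m)})$ forces $H_N^{i,j}(B^{(m)}) \neq 0$ for some $i$. This yields the elementary degree bounds
\[
\min\deg_q H_N(B^{(m)}) \leq \min\deg_q \mathsf{P}_N(B^{(m)}) \leq \max\deg_q \mathsf{P}_N(B^{(m)}) \leq \max\deg_q H_N(B^{(m)}).
\]

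From here, I would divide through by the positive integer $m(N-m)$, take $\liminf$ of the leftmost ratio and $\limsup$ of the rightmost ratio as $N \to \infty$, and combine with \eqref{eq-colored-MFW-sl-N-limit-special} to obtain \eqref{eq-colored-MFW-sl-N-limit-special-poly}. For the sequence version \eqref{eq-colored-MFW-sl-N-limit-general-poly}, I would apply the same manipulation along the sequence $\{(m_k, N_k)\}$ and invoke \eqref{eq-colored-MFW-sl-N-limit-general} in place of \eqref{eq-colored-MFW-sl-N-limit-special}.

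All the substantive content of the corollary is already packaged in Theorem~\ref{THM-colored-MFW-sl-N-limit}; the descent to the decategorified polynomial statement is purely formal and presents no real obstacle. The only conceivable concern --- that homological cancellation could shift the extremal $q$-degrees of $\mathsf{P}_N$ outside the range of $H_N$ --- is automatically excluded because each monomial $q^j$ in the Euler characteristic gathers contributions only from $H_N^{i,j}$ at that single quantum grading $j$.
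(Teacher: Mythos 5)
Your proposal is correct and follows essentially the same route the paper takes: the paper simply asserts that the corollary is a consequence of Theorem~\ref{THM-colored-MFW-sl-N-limit} via categorification, and your argument is exactly the standard decategorification step filling in that assertion. The key observation — that a nonzero coefficient of $q^j$ in the Euler characteristic $\sum_{i,j}(-1)^i q^j \dim H_N^{i,j}$ forces $H_N^{i,j}\neq 0$ for some $i$, giving $\min\deg_q H_N \leq \min\deg_q \mathsf{P}_N \leq \max\deg_q \mathsf{P}_N \leq \max\deg_q H_N$ — is precisely what is needed, and combining these pointwise inequalities with Theorem~\ref{THM-colored-MFW-sl-N-limit} via monotonicity of $\liminf$ and $\limsup$ closes the argument.
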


Clearly, \eqref{eq-colored-MFW-sl-N-limit-special} and \eqref{eq-colored-MFW-sl-N-limit-special-poly} specialize to \eqref{ineq-MWF-sl-N-KR} and \eqref{ineq-MWF-sl-N} when $m=1$. Moreover, Theorem \ref{THM-colored-MFW-sl-N-limit} and Corollary \ref{Coro-colored-MFW-sl-N-limit-poly} give infinitely many new upper bounds for the self linking number and lower bounds for the braid index.

\subsection{A composition product for the MOY graph polynomial} 

To prove the colored Morton-Franks-Williams inequalities, we only need to prove Proposition \ref{Prop-colored-MFW-sl-N}. We do so by generalizing the proof of the Morton-Franks-Williams inequality by Jaeger \cite{Jaeger-composition}. A key ingredient of our proof is a composition product for the MOY graph polynomial, which generalizes that of Wagner \cite{Wagner-composition}. Before stating our composition product, we briefly recall some basic facts about the MOY graph polynomial \cite{MOY}. (A more detailed review will be given in Section \ref{sec-MOY-graph-homology}.)

\begin{figure}[ht]
\[
\xymatrix{
\setlength{\unitlength}{1pt}
\begin{picture}(50,50)(-145,-5)

\put(-120,0){\vector(0,1){20}}

\put(-128,7){\small{$e$}}

\put(-115,7){\tiny{$m+n$}}

\put(-120,20){\vector(1,1){20}}

\put(-108,25){\tiny{$n$}}

\put(-120,20){\vector(-1,1){20}}

\put(-135,25){\tiny{$m$}}

\put(-133,35){\small{$e_1$}}

\put(-115,35){\small{$e_2$}}

\end{picture} && \setlength{\unitlength}{1pt}
\begin{picture}(50,50)(-145,-45)

\put(-120,-20){\vector(0,1){20}}

\put(-128,-7){\small{$e$}}

\put(-115,-7){\tiny{$m+n$}}

\put(-100,-40){\vector(-1,1){20}}

\put(-108,-25){\tiny{$n$}}

\put(-140,-40){\vector(1,1){20}}

\put(-135,-25){\tiny{$m$}}

\put(-133,-38){\small{$e_1$}}

\put(-115,-38){\small{$e_2$}}

\end{picture}
}
\]
\caption{}\label{fig-MOY-vertex}
\end{figure}
 
\begin{definition}\label{def-MOY}
An MOY coloring of an oriented trivalent graph is a function from the set of edges of this graph to the set of non-negative integers such that every vertex of the colored graph is of one of the two types in Figure \ref{fig-MOY-vertex}. 

An MOY graph is an oriented trivalent graph embedded in the plane equipped with an MOY coloring. 

For an MOY graph $\Gamma$, denote by $E(\Gamma)$ the set of all edges of $\Gamma$ and by $V(\Gamma)$ the set of all vertices of $\Gamma$.
\end{definition}

For every positive integer $N$ and every MOY graph $\Gamma$, Murakami, Ohtsuki and Yamada \cite{MOY} defined a single variable polynomial $\left\langle \Gamma \right\rangle_N$, which we call the $\mathfrak{sl}(N)$ MOY graph polynomial. To be consistent with the definition of the colored $\mathfrak{sl}(N)$ link homology in \cite{Wu-color}, we use a slightly different normalization in the present paper. (See Section \ref{sec-MOY-graph-homology} for more details.)

\begin{figure}[ht]
$
\xymatrix{
\setlength{\unitlength}{1pt}
\begin{picture}(100,40)(-50,0)

\put(5,0){\vector(0,1){20}}

\put(-5,0){\vector(0,1){20}}

\put(-25,0){\vector(0,1){20}}

\put(25,0){\vector(0,1){20}}

\put(-5,20){\vector(-1,1){20}}

\put(-25,20){\vector(-1,1){20}}

\put(5,20){\vector(1,1){20}}

\put(25,20){\vector(1,1){20}}

\put(-22,20){{$\dots$}}

\put(10,20){{$\dots$}}

\end{picture} & \text{or} & \setlength{\unitlength}{1pt}
\begin{picture}(100,40)(-50,0)

\put(5,20){\vector(0,1){20}}

\put(-5,20){\vector(0,1){20}}

\put(25,20){\vector(0,1){20}}

\put(-25,20){\vector(0,1){20}}

\put(-25,0){\vector(1,1){20}}

\put(-45,0){\vector(1,1){20}}

\put(25,0){\vector(-1,1){20}}

\put(45,0){\vector(-1,1){20}}

\put(-22,20){$\dots$}

\put(10,20){$\dots$}

\end{picture}
} 
$
\caption{}\label{tri-vertex-split} 

\end{figure}

\begin{definition}\label{def-MOY-rot}
Let $\Gamma$ be an MOY graph. For each edge of $\Gamma$ with color $m$, change it into $m$ parallel edges. Also replace each vertex of $\Gamma$ by one of the two shapes in Figure \ref{tri-vertex-split}. This changes $\Gamma$ into a collection $\mathcal{C}$ of oriented circles embedded in the plane. For each circle $C \in \mathcal{C}$, denote by $\mathrm{rot}(C)$ its usual rotation number, that is
\[
\mathrm{rot}(C) = 
\begin{cases}
1 & \text{if } C \text{ is counterclockwise,} \\
-1 & \text{if } C \text{ is clockwise.}
\end{cases}
\]
Then define
\begin{equation}\label{eq-def-rot-MOY}
\mathrm{rot}(\Gamma) = \sum_{C \in \mathcal{C}} \mathrm{rot}(C).
\end{equation}
\end{definition}

\begin{definition}\label{def-MOY-label}
Let $\Gamma$ be an MOY graph. Denote by $\mathsf{c}$ its color function. That is, for every edge $e$ of $\Gamma$, the color of $e$ is $\mathsf{c}(e)$. A labeling $\mathsf{f}$ of $\Gamma$ is an MOY coloring of the underlying oriented trivalent graph of $\Gamma$ such that $\mathsf{f}(e)\leq \mathsf{c}(e)$ for every edge $e$ of $\Gamma$.

Denote by $\mathcal{L}(\Gamma)$ the set of all labellings of $\Gamma$. For every $\mathsf{f} \in \mathcal{L}(\Gamma)$, denote by $\Gamma_{\mathsf{f}}$ the MOY graph obtained by re-coloring the underlying oriented trivalent graph of $\Gamma$ using $\mathsf{f}$.

For every $\mathsf{f} \in \mathcal{L}(\Gamma)$, define a function $\bar{\mathsf{f}}$ on $E(\Gamma)$ by $\bar{\mathsf{f}}(e)= \mathsf{c}(e)- \mathsf{f}(e)$ for every edge $e$ of $\Gamma$. It is easy to see that $\bar{\mathsf{f}}\in \mathcal{L}(\Gamma)$.

Let $v$ be a vertex of $\Gamma$ of either type in Figure \ref{fig-MOY-vertex}. (Note that, in either case, $e_1$ is to the left of $e_2$ when one looks in the direction of $e$.) For every $\mathsf{f} \in \mathcal{L}(\Gamma)$, define
\[
[v|\Gamma|\mathsf{f}] = \frac{1}{2} (\mathsf{f}(e_1)\bar{\mathsf{f}}(e_2) - \bar{\mathsf{f}}(e_1)\mathsf{f}(e_2)).
\]
\end{definition}

\begin{theorem}\label{THM-composition-product}
Let $\Gamma$ be an MOY graph. For positive integers $M$, $N$ and $\mathsf{f} \in \mathcal{L}(\Gamma)$, define
\[
\sigma_{M,N}(\Gamma,\mathsf{f}) = M \cdot \mathrm{rot}(\Gamma_{\bar{\mathsf{f}}}) - N \cdot \mathrm{rot}(\Gamma_{\mathsf{f}}) +  \sum_{v\in V(\Gamma)} [v|\Gamma|\mathsf{f}].
\] 
Then
\begin{equation}\label{eq-composition-product}
\left\langle \Gamma \right\rangle_{M+N} = \sum_{\mathsf{f} \in \mathcal{L}(\Gamma)} q^{\sigma_{M,N}(\Gamma,\mathsf{f})} \cdot \left\langle \Gamma_{\mathsf{f}} \right\rangle_M \cdot \left\langle \Gamma_{\bar{\mathsf{f}}} \right\rangle_N.
\end{equation}
\end{theorem}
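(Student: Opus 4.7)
The plan is to prove Theorem~\ref{THM-composition-product} by induction on the complexity of $\Gamma$, using the local MOY relations that characterize $\langle \cdot \rangle_N$. Since any MOY graph can be reduced via these relations to a disjoint union of oriented circles, it suffices to verify that \eqref{eq-composition-product} holds on disjoint unions of circles, and that both sides transform identically under each MOY relation.

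For the base case, consider a single embedded oriented circle $C$ of color $k$. The set $\mathcal{L}(C)$ is indexed by $i \in \{0, 1, \dots, k\}$, there are no vertex contributions, and $\sigma_{M,N}(C,i)$ reduces to a linear expression in $M \cdot \mathrm{rot}(C)(k-i) - N \cdot \mathrm{rot}(C) \cdot i$. Using the normalization of $\langle C \rangle_N$ from \cite{Wu-color}, the identity \eqref{eq-composition-product} becomes a quantum Vandermonde identity of the form $\qb{M+N}{k} = \sum_{i=0}^{k} q^{(\cdots)} \qb{M}{i}\qb{N}{k-i}$, which is classical. The extension to disjoint unions of circles is multiplicative.

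For the inductive step, locate a subgraph $G \subset \Gamma$ to which some MOY relation applies, and write $\langle \Gamma \rangle_{M+N} = \sum_\alpha c_\alpha(q, M+N) \langle \Gamma_\alpha \rangle_{M+N}$ for strictly simpler MOY graphs $\Gamma_\alpha$; then apply the inductive hypothesis to each $\langle \Gamma_\alpha \rangle_{M+N}$. On the right-hand side of \eqref{eq-composition-product}, partition $\mathcal{L}(\Gamma)$ by the restriction of $\mathsf{f}$ to the boundary edges of $G$, and apply the corresponding MOY relations to $\langle \cdot \rangle_M$ and $\langle \cdot \rangle_N$ separately inside $G$ for each boundary labelling. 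Matching the two sides then reduces to local $q$-binomial identities expressing the coefficient $c_\alpha(q, M+N)$ in terms of $c_\alpha(q, M)$, $c_\alpha(q, N)$, and the appropriate $q$-shift.

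The main obstacle is the $q$-exponent $\sigma_{M,N}(\Gamma,\mathsf{f})$: the rotation contributions $\mathrm{rot}(\Gamma_{\mathsf{f}})$ and $\mathrm{rot}(\Gamma_{\bar{\mathsf{f}}})$ are global, whereas the MOY moves are local. The key technical lemma required is that, when one replaces $G \subset \Gamma$ by a MOY-equivalent subgraph with the same boundary, the change in $M\cdot\mathrm{rot}(\Gamma_{\bar{\mathsf{f}}}) - N\cdot\mathrm{rot}(\Gamma_{\mathsf{f}})$ is accounted for completely by the change in the local vertex sum $\sum_{v\in V(G)}[v|\Gamma|\mathsf{f}]$, up to $M$- and $N$-dependent shifts that cancel against the quantum Vandermonde splitting of $c_\alpha$. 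This amounts to a finite check over the short list of MOY moves, paralleling Wagner's local verification in \cite{Wagner-composition} but now for edges of arbitrary color; once it is in place, the induction closes.
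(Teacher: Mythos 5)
Your strategy is the Jaeger--Wagner route: verify the identity on disjoint unions of circles and then show that both sides transform identically under the MOY relations. The paper explicitly considers and rejects this approach at the start of Section~\ref{sec-composition-product}, remarking that a direct generalization of Jaeger's and Wagner's verification ``would be rather lengthy,'' and instead derives the composition product directly from the MOY state sum formula. Concretely, the paper observes that $\Sigma_{M+N}$ splits as $\Sigma' \sqcup \Sigma''$ with $\#\Sigma' = M$ and $\#\Sigma'' = N$; each state $\varphi \in \mathcal{S}_{M+N}(\Gamma)$ then induces a labeling $\mathsf{f}_\varphi \in \mathcal{L}(\Gamma)$ by $\mathsf{f}_\varphi(e) = \#(\varphi(e) \cap \Sigma')$, and the map $\varphi \mapsto (\varphi_1, \varphi_2)$ gives a bijection $\mathcal{S}_{M+N}^{\mathsf{f}}(\Gamma) \to \mathcal{S}_M(\Gamma_{\mathsf{f}}) \times \mathcal{S}_N(\Gamma_{\bar{\mathsf{f}}})$. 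Two short computations (Lemma~\ref{lemma-MOY-state-split-decomp-2}) show that under this bijection the rotation number and the vertex weights decompose exactly so as to produce the factor $q^{\sigma_{M,N}(\Gamma,\mathsf{f})}$, and the theorem follows by regrouping the state sum. This is a one-page argument with no induction and no case analysis of local moves.

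Your approach, by contrast, would require (a) a proof that the MOY relations for arbitrary colors reduce every MOY graph to circles with a well-defined evaluation, and (b) a verification of your ``key technical lemma'' for each of the (not short) list of fully colored MOY relations. You identify (b) as the crux but do not carry it out; as written, the proposal is a plan rather than a proof. The lemma you need is not a single statement but a family of $q$-binomial identities, one per MOY move per choice of boundary colors, and the bookkeeping of how $\sigma_{M,N}$ changes under each move is precisely where the length that the paper warns about would accumulate. Your base case is correct: for a single circle the identity is the $q$-Vandermonde identity $\qb{M+N}{k} = \sum_i q^{(\cdots)} \qb{M}{i}\qb{N}{k-i}$, and indeed this fact reappears in the paper's framework as the decomposition of a state on a circle. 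But without the local verifications the induction does not close. If you want to pursue this route, Wagner's paper shows the shape of the computations for colors $1,2$; generalizing is feasible but substantially longer than the state-sum proof, which you should compare against since it sidesteps the induction entirely.
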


It is straightforward to check that, if $\Gamma$ is a $1,2$-colored MOY graph, then \eqref{eq-composition-product} specializes to Wagner's composition product \cite[Lemma 1.1]{Wagner-composition}, which implies Jaeger's composition product for the HOMFLY-PT polynomial \cite[Proposition 1]{Jaeger-composition}.

\subsection{Open problems and remarks}

Jaeger's composition product \cite{Jaeger-composition} was generalized by Turaev \cite{Turaev-composition} to a comultiplication in the HOMFLY-PT skein module of a thickened surface. Przytycki \cite{Przytycki-composition} further proved that this leads to a Hopf algebra structure on this module.

\begin{question}
Is it possible to interpret the composition product \eqref{eq-composition-product} in a framework similar to that given by Turaev and Przytycki?
\end{question}

In \cite{Wu-color}, I constructed a $\zed$-graded $\mathfrak{sl}(N)$ graph homology $H_N$ whose graded dimension is the $\mathfrak{sl}(N)$ MOY graph polynomial. (See \cite[Theorem 14.7]{Wu-color}.) So Theorem \ref{THM-composition-product} implies the following corollary, which generalizes \cite[Theorem 1.2]{Wagner-composition}.

\begin{corollary}\label{cor-composition-product-homology}
Let $\Gamma$ be an MOY graph. Then, for positive integers $M$ and $N$,
\[
H_{M+N}(\Gamma) \cong \bigoplus_{\mathsf{f} \in \mathcal{L}(\Gamma)} H_M(\Gamma_{\mathsf{f}}) \otimes_\C H_N(\Gamma_{\bar{\mathsf{f}}}) \{q^{\sigma_{M,N}(\Gamma,\mathsf{f})}\},
\]
where the isomorphism preserves the $\zed$-grading.
\end{corollary}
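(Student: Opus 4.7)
The plan is to reduce the homological statement to the polynomial identity already proven in Theorem \ref{THM-composition-product}, exploiting the fact that $H_N(\Gamma)$ for an MOY graph is a finite-dimensional $\zed$-graded $\C$-vector space, hence determined up to isomorphism by its graded dimension.

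First I would invoke \cite[Theorem 14.7]{Wu-color} to recall that $\gdim_q H_N(\Gamma) = \langle \Gamma \rangle_N$ for every MOY graph $\Gamma$ and every positive integer $N$. Since the MOY polynomial is a Laurent polynomial in $q$ with non-negative integer coefficients, this gives that $H_N(\Gamma)$ is finite-dimensional in every $\zed$-degree (in fact, totally finite-dimensional). In this category, two $\zed$-graded $\C$-vector spaces are isomorphic if and only if their graded dimensions agree, so the corollary reduces to a graded-dimension computation.

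Next I would compute $\gdim_q$ of both sides. For the right-hand side, the graded dimension of a tensor product of graded vector spaces is the product of the graded dimensions, and the grading shift $\{q^{\sigma_{M,N}(\Gamma,\mathsf{f})}\}$ simply multiplies the graded dimension by $q^{\sigma_{M,N}(\Gamma,\mathsf{f})}$. Therefore
\[
\gdim_q \!\!\!\bigoplus_{\mathsf{f} \in \mathcal{L}(\Gamma)} \!\! H_M(\Gamma_{\mathsf{f}}) \otimes_\C H_N(\Gamma_{\bar{\mathsf{f}}}) \{q^{\sigma_{M,N}(\Gamma,\mathsf{f})}\} = \sum_{\mathsf{f} \in \mathcal{L}(\Gamma)} q^{\sigma_{M,N}(\Gamma,\mathsf{f})} \langle \Gamma_{\mathsf{f}} \rangle_M \langle \Gamma_{\bar{\mathsf{f}}} \rangle_N.
\]
By Theorem \ref{THM-composition-product} this sum equals $\langle \Gamma \rangle_{M+N}$, which in turn equals $\gdim_q H_{M+N}(\Gamma)$ by \cite[Theorem 14.7]{Wu-color}. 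So both sides of the asserted isomorphism have the same graded dimension, and the corollary follows.

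I expect no serious obstacle here: the content of the corollary lies entirely in Theorem \ref{THM-composition-product}, and the passage from polynomials to homology is routine once one knows that MOY graph homology is freely generated as a graded vector space. The only subtlety worth mentioning is that the conclusion is an abstract isomorphism of graded vector spaces rather than a canonical one; producing a canonical chain-level splitting would presumably require constructing explicit idempotents on the matrix factorizations associated to the MOY graphs, which is not needed for the bounds in Proposition \ref{Prop-colored-MFW-sl-N}.
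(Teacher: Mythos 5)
Your proposal is correct and matches the paper's approach exactly: the paper deduces Corollary \ref{cor-composition-product-homology} from Theorem \ref{THM-composition-product} by dimension counting, using \cite[Theorem 14.7]{Wu-color} to identify $\gdim_q H_N(\Gamma)$ with $\langle\Gamma\rangle_N$, and even remarks afterward that the resulting isomorphism is non-canonical and asks whether a more natural construction exists — the same caveat you raise at the end.
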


\begin{question}
The isomorphism in Corollary \ref{cor-composition-product-homology} is obtained here by dimension counting. Is there a more natural construction of this isomorphism? It seems interesting to compare this problem to \cite[Theorem 3.16]{Wu-color-ras}.
\end{question}

One can modify the definition of the $\mathfrak{sl}(N)$ MOY graph polynomial to get a two-variable MOY graph polynomial and use it to define a colored HOMFLY-PT link polynomial. (See for example \cite{Mackaay-Stosic-Vaz2}.) This colored HOMFLY-PT polynomial has been categorified by a colored HOMFLY-PT link homology \cite{Webster-Williamson}. We expect Rasmussen's spectral sequence to generalize to a spectral sequence relating the colored HOMFLY-PT link homology to the colored $\mathfrak{sl}(N)$ link homology. This should imply the following conjecture.

\begin{conjecture}
Denote by $H$ the colored HOMFLY-PT link homology (with an appropriate normalization.) Then
\begin{eqnarray}
\label{eq-colored-sl-N-to-HOMFLY-min} \lim_{N\rightarrow \infty} \frac{\min\deg_q H_N(B^{(m)})}{N-m} & = & \min\deg_x H(B^{(m)}), \\
\label{eq-colored-sl-N-to-HOMFLY-max} \lim_{N\rightarrow \infty} \frac{\max\deg_q H_N(B^{(m)})}{N-m} & = & \max\deg_x H(B^{(m)}),
\end{eqnarray}
where $\deg_x$ is the degree from the $x$-grading which corresponds to the ``framing variable" $x$ of the colored HOMFLY-PT link polynomial.

In particular,
\begin{equation}\label{eq-colored-MFW-HOMFLY}
w -b \leq  \frac{\min\deg_x H(B^{(m)})}{m} \leq  \frac{\max\deg_x H(B^{(m)})}{m} \leq w +b.
\end{equation}
\end{conjecture}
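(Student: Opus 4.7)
My plan is to reduce the conjecture to the existence of an appropriate colored Rasmussen-type spectral sequence, as is already suggested in the text. Concretely, I would aim to construct, for each integer $N > m$, a bounded spectral sequence $\{E_r^{(N)}\}_{r \geq 1}$ whose $E_1$-page is isomorphic to the colored HOMFLY-PT homology $H(L)$ and whose $E_\infty$-page is isomorphic to the colored $\mathfrak{sl}(N)$ homology $H_N(L)$, and in which the $x$-grading on $H$ collapses onto the $q$-grading on $H_N$ via a substitution of the form $x \mapsto q^{N-m}$, up to an overall grading shift depending only on $m$ and on combinatorial data of the diagram.

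Granted this, the first step would be a grading bookkeeping argument. Writing the tri-grading on $H(L)$ as $(i,j,k)$ with $k$ the $x$-grading, a class surviving to $E_\infty$ in tri-degree $(i,j,k)$ should contribute to $H_N(L)$ in bi-degree $(i,\, j + (N-m)k)$, up to a universal linear correction in $N$ and $m$. Because $H(B^{(m)})$ is finitely supported and the set of occurring pairs $(j,k)$ is finite and independent of $N$, the coefficient $(N-m)$ of $k$ dominates the fixed $j$-contribution as $N\to\infty$. Moreover, any spectral sequence differential connecting two classes of different $k$-gradings changes the associated $q$-degree by a quantity linear in $N$, so for $N$ large enough no such cancellation is possible between classes in distinct $k$-gradings. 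This pins the extremal $q$-degrees of $H_N(B^{(m)})$ to be asymptotic to $(N-m)\cdot\max\deg_x H(B^{(m)})$ and $(N-m)\cdot\min\deg_x H(B^{(m)})$ respectively, which after division by $N-m$ and passage to the limit yields \eqref{eq-colored-sl-N-to-HOMFLY-min} and \eqref{eq-colored-sl-N-to-HOMFLY-max}.

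The second step would be an immediate combination with Theorem \ref{THM-colored-MFW-sl-N-limit}: substituting the two limit identities into \eqref{eq-colored-MFW-sl-N-limit-special} and dividing by the common factor $m$ transfers the $w\pm b$ bounds from $\min\deg_q H_N/(m(N-m))$ and $\max\deg_q H_N/(m(N-m))$ to the normalized quantities $\min\deg_x H(B^{(m)})/m$ and $\max\deg_x H(B^{(m)})/m$, which gives \eqref{eq-colored-MFW-HOMFLY}.

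The hard part, and precisely the reason the statement is posed as a conjecture, is the construction of the spectral sequence. In Rasmussen's original uncolored argument one deforms the matrix factorization potential $x^{N+1}/(N+1)$ to $x^{N+1}/(N+1) - x$, which naturally upgrades a bi-grading to the tri-grading required above. The colored HOMFLY-PT homology of Webster--Williamson \cite{Webster-Williamson} is built through a categorification of the Hecke algebra rather than by direct matrix factorizations, so transporting Rasmussen's deformation there is nontrivial. A plausible route would be to impose a compatible potential deformation on each colored edge of the matrix factorizations appearing in \cite{Wu-color}, produce a filtered complex whose associated graded recovers the Webster--Williamson construction, and then verify both convergence and the precise grading collapse $x = q^{N-m}$. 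Identifying the $E_1$-page with the colored HOMFLY-PT homology and pinning down the grading shift so that it matches the right-hand sides of \eqref{eq-colored-sl-N-to-HOMFLY-min} and \eqref{eq-colored-sl-N-to-HOMFLY-max} on the nose is the substantive technical difficulty, and it is unlikely to be settled without a careful adaptation of the methods of \cite{Wu-color-ras} to the colored HOMFLY-PT setting.
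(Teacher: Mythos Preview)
The statement you are addressing is a \emph{conjecture}; the paper does not prove it and offers no argument beyond the heuristic you already cite. Your proposal is therefore not being compared against a proof but against the paper's own motivating paragraph, which says exactly what you say: one expects a colored analogue of Rasmussen's spectral sequence connecting the colored HOMFLY-PT homology to the colored $\mathfrak{sl}(N)$ homology, and this would imply the conjecture. Your grading-bookkeeping sketch and your identification of the spectral-sequence construction as the missing ingredient are both accurate and in line with the paper's intent; you also correctly flag that the Webster--Williamson construction is not set up via matrix factorizations, which is precisely why the statement remains open.

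One point worth noting: the Remark immediately following the conjecture in the paper suggests a different route for the inequality \eqref{eq-colored-MFW-HOMFLY} alone, bypassing the limit identities \eqref{eq-colored-sl-N-to-HOMFLY-min}--\eqref{eq-colored-sl-N-to-HOMFLY-max} entirely. The idea there is to rerun the composition-product argument of Sections~\ref{sec-composition-product}--\ref{sec-color-MFW-proof} directly on a two-variable MOY graph polynomial and a colored HOMFLY-PT homology modeled on it, so that the $x$-degree bounds come out of the same MOY-track estimate rather than from any spectral sequence. You do not mention this alternative; it would prove strictly less (only \eqref{eq-colored-MFW-HOMFLY}, not the limits), but it avoids the hard analytic step and is closer in spirit to what the paper actually accomplishes.
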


\begin{remark}
It seem possible to prove \eqref{eq-colored-MFW-HOMFLY} without using \eqref{eq-colored-sl-N-to-HOMFLY-min} and \eqref{eq-colored-sl-N-to-HOMFLY-max}. The proof should be a slightly modification of our proof of Theorem \ref{THM-colored-MFW-sl-N-limit}. But this would require a construction of the colored HOMFLY-PT link homology directly modeled on the two-variable MOY graph polynomial.
\end{remark}

For any link $L$, Rutherford \cite{Rutherford} proved that $\min\deg_x \mathsf{P}(L) = w-b$ for some braid representation of $L$ with writhe $w$ and $b$ strands if and only if there exists a Legendrian front projection of $L$ that has an oriented ruling.

\begin{question}
Can one generalize Rutherford's result to a necessary and sufficient condition to the sharpness of any of \eqref{eq-colored-MFW-sl-N-limit-special}, \eqref{eq-colored-MFW-sl-N-limit-general}, \eqref{eq-colored-MFW-sl-N-limit-special-poly} or \eqref{eq-colored-MFW-sl-N-limit-general-poly}?
\end{question}

Applying the Morton-Franks-Williams inequality to cables, Morton and Short \cite{Morton-Short} introduced the cabled Morton-Franks-Williams inequalities. For simplicity, let us only consider knots. Suppose a knot $K$ has a braid diagram of $b$ strands with writhe $w$. Denote by $K_{m,k}$ the $(m,k)$-cable of $K$. Then the braid diagram of $K$ leads to an obvious braid diagram of $K_{m,k}$ of $mb$ strands with writhe $mw+k(m-1)$. Applying \eqref{ineq-MWF-sl-N-KR} to this braid diagram of $K_{m,k}$, one gets

\begin{equation}\label{ineq-MWF-sl-N-KR-cabled}
w-b \leq \liminf_{N\rightarrow \infty}\frac{\min\deg_q H_N(K_{m,k})}{m(N-1)} - \frac{k(m-1)}{m} \leq \limsup_{N\rightarrow \infty}\frac{\max\deg_q H_N(K_{m,k})}{m(N-1)} - \frac{k(m-1)}{m} \leq w+b.
\end{equation}

\begin{question}
How does the colored inequality \eqref{eq-colored-MFW-sl-N-limit-special} compare to the cabled inequality \eqref{ineq-MWF-sl-N-KR-cabled}? Is there an explicit relation between the $\mathfrak{sl}(N)$ homology of $K^{(m)}$ and that of $K_{m,k}$? Will we get better bounds by applying the colored Morton-Franks-Williams inequalities to cables? 
\end{question}

One interesting application of the Morton-Franks-Williams inequality is to verify the following Jones Conjecture for special class of links.

\begin{conjecture}\label{Conj-Jones}\cite[end of Section 8]{Jones-conjecture}
For any link $L$ of braid index $b$, if $B_1$ and $B_2$ are two braid diagrams of $L$ of $b$ strands, then the writhes of $B_1$ and $B_2$ are equal.
\end{conjecture}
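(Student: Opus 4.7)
The plan is to use the colored Morton-Franks-Williams inequality of Theorem \ref{THM-colored-MFW-sl-N-limit} as an obstruction to distinguishing the writhes of two minimum-strand braid representatives of the same link. The crucial point is that, because the colored $\mathfrak{sl}(N)$ homology is a link invariant, the asymptotic quantities
\[
D_-(L,m) := \liminf_{N\to\infty}\frac{\min\deg_q H_N(L^{(m)})}{m(N-m)}, \qquad D_+(L,m) := \limsup_{N\to\infty}\frac{\max\deg_q H_N(L^{(m)})}{m(N-m)}
\]
depend only on $L$ and $m$, not on the braid presentation.

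First, let $L$ have braid index $b$, and let $B_1$, $B_2$ be two $b$-strand braid representatives of $L$ with writhes $w_1$, $w_2$. Applying \eqref{eq-colored-MFW-sl-N-limit-special} to each yields $w_i - b \leq D_-(L,m) \leq D_+(L,m) \leq w_i + b$ for $i=1,2$, so
\[
|w_1-w_2| \;\leq\; 2b - \bigl(D_+(L,m) - D_-(L,m)\bigr).
\]
In particular, whenever one of the colored inequalities is sharp for some minimum-strand representative---say $D_+(L,m) = w_1 + b$ for some choice of $B_1$ and $m$---then $w_2+b \geq D_+(L,m) = w_1+b$ forces $w_2 \geq w_1$, and the symmetric lower estimate pins down $w_1=w_2$. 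So the plan reduces Conjecture \ref{Conj-Jones}, on any given class of links, to establishing sharpness of the colored MFW inequality at some color $m$ for one minimum-strand representative in that class.

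The next step is to identify such classes. Natural candidates include positive braids, for which the uncolored bound \eqref{ineq-MWF-sl-N-KR} is already known to be sharp, and more generally links admitting a Legendrian front projection with an oriented ruling in the spirit of Rutherford's theorem. To go beyond, one would try to use the composition product of Theorem \ref{THM-composition-product} and Corollary \ref{cor-composition-product-homology} to reduce the question of sharpness at color $m$ to sharpness of lower-colored or uncolored pieces; the freedom to choose $m$ large should then capture classes that the original $m=1$ inequality cannot see, exactly as the cabled inequality \eqref{ineq-MWF-sl-N-KR-cabled} does in Morton-Short's setting.

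The main obstacle is that the gap $2b - (D_+(L,m) - D_-(L,m))$ can remain strictly positive for every $m$: there exist links (already in the uncolored case) where every bound in \eqref{ineq-MWF-sl-N-KR} is strict, and there is no a priori reason this should improve in the colored limit. When that occurs, the approach above is silent, and one would need a genuinely different input---a Rasmussen-type spectral sequence to track the degree loss, a comparison with the cabled bound \eqref{ineq-MWF-sl-N-KR-cabled}, or a Birman-Menasco-type Markov-theorem-without-stabilization argument---in order to control the writhe when the quantum degrees alone do not. Quantifying this gap and proving sharpness on broad families is thus the central difficulty.
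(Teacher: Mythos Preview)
The statement you are attempting to prove is a \emph{conjecture}, not a theorem: the paper does not prove it, and it remains open in general. The paper merely records it as motivation and observes, in the paragraph immediately following, that sharpness of both ends of the Morton--Franks--Williams inequality for a given closed braid implies the Jones Conjecture for that braid's underlying link---precisely the mechanism you describe. So there is no ``paper's own proof'' to compare against.

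Your proposal is not a proof either, and to your credit you recognize this: you reduce the conjecture to a sharpness statement, point to classes where sharpness is known, and then name the central obstacle, namely that sharpness fails in general and there is no a priori reason the colored versions should do better. That is an accurate summary of the situation and matches the paper's own discussion and the question it poses after the conjecture.

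One correction to your argument: you claim that if ``one of the colored inequalities is sharp'' for $B_1$, say $D_+(L,m)=w_1+b$, then ``the symmetric lower estimate pins down $w_1=w_2$.'' Sharpness of only the upper bound yields $w_1\le w_2$ (from $w_1+b=D_+(L,m)\le w_2+b$) but not the reverse inequality; you need \emph{both} ends sharp, so that $D_+(L,m)-D_-(L,m)=2b$, to force $w_1=w_2$. The paper states this hypothesis correctly: ``If both ends of the Morton--Franks--Williams inequality \eqref{ineq-MFW} are sharp\dots''
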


If both ends of the Morton-Franks-Williams inequality \eqref{ineq-MFW} are sharp for a closed braid $B$, then half of the $x$-span of the HOMFLY-PT polynomial is equal to the braid index of $B$ and the Jones Conjecture is true for $B$. But, since the Morton-Franks-Williams inequality is in general not sharp, this argument works only for special classes of links. See for example \cite{FW,Kawamuro,Lee-Seo-JC,Murasugi-JC,Nakamura-Jones-conj,Stoimenow-Jones-Conj} for related results.

\begin{question}
It is clear that the sharpness (of both ends) of any of the inequalities \eqref{eq-colored-MFW-sl-N-limit-special}, \eqref{eq-colored-MFW-sl-N-limit-general}, \eqref{eq-colored-MFW-sl-N-limit-special-poly} or \eqref{eq-colored-MFW-sl-N-limit-general-poly} would similarly imply the Jones Conjecture. Can one use these inequalities to obtain further results on the Jones Conjecture?
\end{question}

\subsection{Organization of this paper} The construction of the colored $\mathfrak{sl}(N)$ link homology is used only superficially in the present paper. So no prior experience in the colored $\mathfrak{sl}(N)$ link homology is needed to understand the proofs in this paper. In Section \ref{sec-MOY-graph-homology}, we will review aspects of the colored $\mathfrak{sl}(N)$ link polynomial and homology that are used in our proofs. We will then establish the composition product in Section \ref{sec-composition-product} and apply it to prove the colored Morton-Franks-Williams inequalities in Section \ref{sec-color-MFW-proof}.

\begin{acknowledgments}
I would like to thank Jozef Przytycki for interesting discussions on the history of the composition product.
\end{acknowledgments}

\section{The Colored $\mathfrak{sl}(N)$ Link Polynomial and Homology}\label{sec-MOY-graph-homology}

Using the MOY graph polynomial, Murakami, Ohtsuki and Yamada \cite{MOY} gave an alternative construction of the $\mathfrak{sl}(N)$ Reshetikhin-Turaev polynomial \cite{Resh-Tur1} for links colored by non-negative integers. We now briefly review their construction in \cite{MOY} and the definition of the colored $\mathfrak{sl}(N)$ link homology in \cite{Wu-color}.

\subsection{The MOY graph polynomial}  We review the MOY graph polynomial \cite{MOY} in this subsection. Our notations and normalizations are slightly different from that used in \cite{MOY}.

For a positive integer $N$, define $\Sigma_N= \{2k-N+1|k=0,1,\dots, N-1\}$. Denote by $\mathcal{P}(\Sigma_N)$ the set of subsets of $\Sigma_N$. For a finite set $A$, denote by $\#A$ the cardinality of $A$. Define a function $\pi:\mathcal{P}(\Sigma_N) \times \mathcal{P}(\Sigma_N) \rightarrow \zed_{\geq 0}$ by
\begin{equation}\label{eq-def-pi}
\pi (A_1, A_2) = \# \{(a_1,a_2) \in A_1 \times A_2 ~|~ a_1>a_2\} \text{ for } A_1,~A_2 \in \mathcal{P}(\Sigma_N).
\end{equation}

\begin{figure}[ht]
$
\xymatrix{
\setlength{\unitlength}{1pt}
\begin{picture}(60,40)(-30,0)

\put(0,0){\vector(0,1){20}}

\put(0,20){\vector(1,1){20}}

\put(0,20){\vector(-1,1){20}}

\put(2,5){\tiny{$e$}}

\put(12,26){\tiny{$e_2$}}

\put(-15,26){\tiny{$e_1$}}

\end{picture} & \text{or} & \setlength{\unitlength}{1pt}
\begin{picture}(60,40)(-30,0)

\put(0,20){\vector(0,1){20}}

\put(-20,0){\vector(1,1){20}}

\put(20,0){\vector(-1,1){20}}

\put(2,30){\tiny{$e$}}

\put(12,12){\tiny{$e_2$}}

\put(-15,13){\tiny{$e_1$}}

\end{picture}
} 
$
\caption{}\label{tri-vertex} 

\end{figure}

Let $\Gamma$ be an MOY graph. Denote by $E(\Gamma)$ the set of edges of $\Gamma$, by $V(\Gamma)$ the set of vertices of $\Gamma$ and by $\mathsf{c}:E(\Gamma) \rightarrow \zed_{\geq 0}$ the color function of $\Gamma$. That is, for every edge $e$ of $\Gamma$, $\mathsf{c}(e) \in \zed_{\geq 0}$ is the color of $e$. 

A state of $\Gamma$ is a function $\varphi: E(\Gamma) \rightarrow \mathcal{P}(\Sigma_N)$ such that
\begin{enumerate}[(i)]
	\item for every edge $e$ of $\Gamma$, $\#\varphi(e) = \mathsf{c}(e)$,
	\item for every vertex $v$ of $\Gamma$, as depicted in Figure \ref{tri-vertex}, we have $\varphi(e)=\varphi(e_1) \cup \varphi(e_2)$. 
\end{enumerate}
Note that (i) and (ii) imply that $\varphi(e_1) \cap \varphi(e_2)=\emptyset$.

Denote by $\mathcal{S}_N(\Gamma)$ the set of states of $\Gamma$.

For a state $\varphi$ of $\Gamma$ and a vertex $v$ of $\Gamma$ (as depicted in Figure \ref{tri-vertex}), the weight of $v$ with respect to $\varphi$ is defined to be 
\begin{equation}\label{eq-weight-vertex}
\mathrm{wt}(v;\varphi) = q^{\frac{\mathsf{c}(e_1)\mathsf{c}(e_2)}{2} - \pi(\varphi(e_1),\varphi(e_2))}.
\end{equation}

Given a state $\varphi$ of $\Gamma$, replace each edge $e$ of $\Gamma$ by $\mathsf{c}(e)$ parallel edges, assign to each of these new edges a different element of $\varphi(e)$ and, at every vertex, connect each pair of new edges assigned the same element of $\Sigma_N$. This changes $\Gamma$ into a collection $\mathcal{C}_\varphi$ of embedded circles, each of which is assigned an element of $\Sigma_N$. By abusing notation, we denote by $\varphi(C)$ the element of $\Sigma_N$ assigned to $C\in \mathcal{C}_\varphi$. Note that: 
\begin{itemize}
	\item There may be intersections between different circles in $\mathcal{C}_\varphi$. But, each circle in $\mathcal{C}_\varphi$ is embedded, that is, it has no self-intersections or self-tangencies.
	\item There may be more than one way to do this. But if we view $\mathcal{C}_\varphi$ as a virtue link and the intersection points between different elements of $\mathcal{C}_\varphi$ virtual crossings, then the above construction is unique up to purely virtual regular Reidemeister moves.
\end{itemize}
The rotation number $\mathrm{rot}(\varphi)$ of $\varphi$ is then defined to be
\begin{equation}\label{eq-rot-state}
\mathrm{rot}(\varphi) = \sum_{C\in \mathcal{C}_\varphi} \varphi(C) \mathrm{rot}(C).
\end{equation}
Clearly, $\mathrm{rot}(\varphi)$ is independent of the choices made in its definition. We also make the following simple observation, which will be useful in Section \ref{sec-composition-product}.

\begin{lemma}\label{lemma-total-rot-constant}
For any state $\varphi$ of $\Gamma$, 
\[
\sum_{C\in \mathcal{C}_\varphi} \mathrm{rot}(C) = \mathrm{rot}(\Gamma).
\]
\end{lemma}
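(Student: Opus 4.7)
My plan is to compute both sides of the identity as $\frac{1}{2\pi}$ times the total turning of tangent vectors along the constructed collections of oriented circles, and to show that this total turning is determined by local data at the edges and vertices of $\Gamma$ that is common to both resolutions. The underlying fact is that, by the Hopf Umlaufsatz, the rotation number of any smooth closed oriented curve in the plane equals $\frac{1}{2\pi}\int d\theta$, where $\theta$ is the tangent angle; summing over a collection of closed curves, the total rotation number equals the total angular turning divided by $2\pi$.

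First I would observe that both the canonical resolution defining $\mathrm{rot}(\Gamma)$ in Definition \ref{def-MOY-rot} and the state resolution producing $\mathcal{C}_\varphi$ replace each edge $e$ by $\mathsf{c}(e)$ parallel strand segments running alongside $e$. The two constructions therefore share an identical collection of strand segments along the edges of $\Gamma$, and differ only in how these strands are patched together inside a small disk around each vertex. In particular, the turning accumulated along edge interiors is the same in both cases, and the lemma reduces to a local claim at each vertex: the total tangent turning inside the vertex disk is independent of the permutation matching incoming to outgoing strands.

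The local claim follows from the following observation at a first-type vertex $v$ in Figure \ref{fig-MOY-vertex} (the second type being symmetric). Regardless of the resolution, the $m+n$ strands on the $e$-side of $v$ all enter with the same tangent direction (that of $e$), and each strand leaves along one of exactly two directions (that of $e_1$ or of $e_2$). Both resolutions route exactly $m$ of the strands to $e_1$ and $n$ to $e_2$, so the total tangent turning inside the vertex disk equals $m\,\Delta\theta_1 + n\,\Delta\theta_2$, where $\Delta\theta_i$ is the principal angular change from the $e$-direction to the $e_i$-direction. This quantity depends only on the planar embedding of $v$, not on which specific strand is routed to which endpoint. Any transverse intersections produced when strands of different circles must swap positions are virtual crossings and do not affect the rotation number of any individual circle.

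The main technical point that needs care is justifying the minimal-turning drawing convention: each strand must be drawn so as to realize the principal angular change between its prescribed start and end tangents, without introducing a spurious additional full loop. Since the state construction is defined only up to virtual regular Reidemeister moves, each strand may be isotoped within its vertex disk to achieve this, with any extra crossings with other circles treated as virtual crossings, which preserve rotation numbers. Once the drawing convention is fixed, the edge contributions and the vertex contributions to the total turning agree in the two resolutions, giving $\sum_{C\in\mathcal{C}_\varphi}\mathrm{rot}(C)=\mathrm{rot}(\Gamma)$.
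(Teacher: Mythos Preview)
Your argument is correct. The paper itself does not supply a proof of this lemma; it is introduced only as ``the following simple observation'' and left to the reader. Your use of the Umlaufsatz to decompose the total rotation into edge-segment turning plus vertex-disk turning, and the observation that in either resolution exactly $m$ strands are routed toward $e_1$ and $n$ toward $e_2$ (so the local turning $m\,\Delta\theta_1+n\,\Delta\theta_2$ is independent of the particular matching), is a clean way to make the observation rigorous. The point you flag about avoiding spurious full loops is handled exactly as you say: the paper already stipulates that the state resolution is well-defined only up to purely virtual regular Reidemeister moves, and rotation numbers of the individual embedded circles are invariant under such moves, so one may always realize the principal angular change at each vertex.

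If anything, your write-up is more elaborate than what the paper has in mind. The intended one-line reason is just that both constructions produce the same multiset of oriented arcs along the edges and route the same \emph{number} of strands into each branch at every vertex, so the collection of circles in either case is a rearrangement of the same local pieces and the total winding is unchanged. Your tangent-angle formalism is a precise way of saying this.
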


Now we are ready to define the $\mathfrak{sl}(N)$ MOY graph polynomial.

\begin{definition}\label{def-MOY-graph-poly}\cite{MOY}
The $\mathfrak{sl}(N)$ MOY graph polynomial of $\Gamma$ is defined to be
\begin{equation}\label{MOY-bracket-def}
\left\langle \Gamma \right\rangle_N := \sum_{\varphi \in \mathcal{S}_N(\Gamma)} (\prod_{v \in V(\Gamma)} \mathrm{wt}(v;\varphi)) q^{\mathrm{rot}(\varphi)}.
\end{equation}
\end{definition}

\begin{remark} 
\begin{enumerate}[1.]
	\item If $\Gamma$ contains an edge with color greater than $N$, then $\mathcal{S}_N(\Gamma) = \emptyset$ and therefore, $\left\langle \Gamma \right\rangle_N=0$.
	\item If $\Gamma$ contains an edge with color $0$, then erasing this edge does not change the polynomial $\left\langle \Gamma \right\rangle_N$.
	\item We allow $\Gamma$ to be the empty graph and use the convention $\left\langle \emptyset \right\rangle_N=1$.
\end{enumerate}
\end{remark}

\subsection{The colored $\mathfrak{sl}(N)$ link polynomial} The $\mathfrak{sl}(N)$ Reshetikhin-Turaev polynomial \cite{Resh-Tur1} for links colored by non-negative integers can be expressed as a combination of the $\mathfrak{sl}(N)$ MOY graph polynomials of the MOY resolutions of its diagram. 

\begin{figure}[ht]
$
\xymatrix{
\setlength{\unitlength}{1pt}
\begin{picture}(40,40)(-20,-20)

\put(-20,-20){\vector(1,1){40}}

\put(20,-20){\line(-1,1){15}}

\put(-5,5){\vector(-1,1){15}}

\put(-11,15){\tiny{$_m$}}

\put(9,15){\tiny{$_n$}}

\end{picture} & \text{or} & \setlength{\unitlength}{1pt}
\begin{picture}(40,40)(-20,-20)

\put(20,-20){\vector(-1,1){40}}

\put(-20,-20){\line(1,1){15}}

\put(5,5){\vector(1,1){15}}

\put(-11,15){\tiny{$_m$}}

\put(9,15){\tiny{$_n$}}

\end{picture}
} 
$
\caption{}\label{crossings-fig} 

\end{figure}

Let $D$ be a diagram of a link colored by non-negative integers. An MOY resolution of $D$ is an MOY graph obtained by replacing each crossing of $D$ (as shown in Figure \ref{crossings-fig}) by the shape in Figure \ref{MOY-res-fig} for some integer $k$ satisfying $\max\{0,m-n\}\leq k \leq m$. Denote by $\mathcal{R}(D)$ the set of all MOY resolutions of $D$.

\begin{figure}[ht]
$
\xymatrix{
\setlength{\unitlength}{1pt}
\begin{picture}(70,60)(-35,0)

\put(-15,0){\vector(0,1){20}}
\put(-15,20){\vector(0,1){20}}
\put(-15,40){\vector(0,1){20}}
\put(15,0){\vector(0,1){20}}
\put(15,20){\vector(0,1){20}}
\put(15,40){\vector(0,1){20}}

\put(15,20){\vector(-1,0){30}}
\put(-15,40){\vector(1,0){30}}

\put(-25,5){\tiny{$_{n}$}}
\put(-25,55){\tiny{$_{m}$}}
\put(-30,30){\tiny{$_{n+k}$}}

\put(-2,15){\tiny{$_{k}$}}
\put(-12,43){\tiny{$_{n+k-m}$}}

\put(18,5){\tiny{$_{m}$}}
\put(18,55){\tiny{$_{n}$}}
\put(18,30){\tiny{$_{m-k}$}}

\end{picture}
} 
$
\caption{}\label{MOY-res-fig} 

\end{figure}

\begin{definition}\cite{MOY}\label{sl-N-poly-def}
For a link diagram $D$ colored by non-negative integers, define the unnormalized Reshetikhin-Turaev $\mathfrak{sl}(N)$ polynomial $\left\langle D \right\rangle_N$ of $D$ by applying the following skein sum at every crossing of $D$.
\[
\left\langle \setlength{\unitlength}{1pt}
\begin{picture}(40,40)(-20,0)

\put(-20,-20){\vector(1,1){40}}

\put(20,-20){\line(-1,1){15}}

\put(-5,5){\vector(-1,1){15}}

\put(-11,15){\tiny{$_m$}}

\put(9,15){\tiny{$_n$}}

\end{picture} \right\rangle_N = \sum_{k=\max\{0,m-n\}}^{m} (-1)^{m-k} q^{k-m}\left\langle \setlength{\unitlength}{1pt}
\begin{picture}(70,60)(-35,30)

\put(-15,0){\vector(0,1){20}}
\put(-15,20){\vector(0,1){20}}
\put(-15,40){\vector(0,1){20}}
\put(15,0){\vector(0,1){20}}
\put(15,20){\vector(0,1){20}}
\put(15,40){\vector(0,1){20}}

\put(15,20){\vector(-1,0){30}}
\put(-15,40){\vector(1,0){30}}

\put(-25,5){\tiny{$_{n}$}}
\put(-25,55){\tiny{$_{m}$}}
\put(-30,30){\tiny{$_{n+k}$}}

\put(-2,15){\tiny{$_{k}$}}
\put(-12,43){\tiny{$_{n+k-m}$}}

\put(18,5){\tiny{$_{m}$}}
\put(18,55){\tiny{$_{n}$}}
\put(18,30){\tiny{$_{m-k}$}}

\end{picture}\right\rangle_N
\]
\[
\left\langle \setlength{\unitlength}{1pt}
\begin{picture}(40,40)(-20,0)

\put(20,-20){\vector(-1,1){40}}

\put(-20,-20){\line(1,1){15}}

\put(5,5){\vector(1,1){15}}

\put(-11,15){\tiny{$_m$}}

\put(9,15){\tiny{$_n$}}

\end{picture} \right\rangle_N = \sum_{k=\max\{0,m-n\}}^{m} (-1)^{k-m} q^{m-k}\left\langle \right\rangle_N
\]
\vspace{.5cm}

Also, for each crossing $c$ of $D$, define the shifting factor $\mathsf{s}(c)$ of $c$ by
\[
\mathsf{s}\left(\right) = 
\begin{cases}
(-1)^{-m} q^{m(N+1-m)} & \text{if } m=n,\\
1 & \text{if } m \neq n,
\end{cases}
\]
\[
\mathsf{s}\left(\right) = 
\begin{cases}
(-1)^m q^{-m(N+1-m)} & \text{if } m=n,\\
1 & \text{if } m \neq n.
\end{cases}
\]

The normalized Reshetikhin-Turaev $\mathfrak{sl}(N)$ polynomial $\mathsf{P}_N(D)$ of $D$ is defined to be
\[
\mathsf{P}_N(D) = \left\langle D \right\rangle_N \cdot \prod_c \mathsf{s}(c),
\]
where $c$ runs through all crossings of $D$.
\end{definition}

\begin{theorem}\cite{MOY}
$\left\langle D \right\rangle_N$ is invariant under Reidemeister moves (II) and (III). $\mathsf{P}_N(D)$ is invariant under all Reidemeister moves.
\end{theorem}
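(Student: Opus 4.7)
Since $\langle D\rangle_N$ is by definition a signed, $q$-weighted sum of MOY graph polynomials over $\mathcal{R}(D)$, the plan is to reduce each Reidemeister invariance to a short list of local identities among $\langle \Gamma\rangle_N$ for MOY graphs that differ only inside a disc. I would prove those local identities directly from the state sum \eqref{MOY-bracket-def}, and then track the signs and $q$-powers through the crossing-expansion formulas of Definition \ref{sl-N-poly-def}.

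\textbf{Local MOY identities.} First I would establish, by enumeration of states with the vertex weight $q^{\mathsf{c}(e_1)\mathsf{c}(e_2)/2-\pi(\varphi(e_1),\varphi(e_2))}$ and global factor $q^{\mathrm{rot}(\varphi)}$, the following local relations: (i) a free color-$m$ circle evaluates to $\qb{N}{m}$; (ii) a digon with a color-$(m+n)$ edge split off into parallel color-$m$ and color-$n$ edges equals $\qb{N-n}{m}$ times the single edge; (iii) associativity at a trivalent vertex, relating the two orders of splitting a color-$(a+b+c)$ edge through intermediate colors; (iv) the ``square''/MOY-IV relation exchanging a horizontal and a vertical color-$k$ edge between two pairs of trivalent vertices as an explicit $\qb{\cdot}{\cdot}$-linear combination of simpler MOY graphs. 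In each case the proof decomposes the set of states according to how the assigned subsets of $\Sigma_N$ partition at the trivalent vertices, and the quantum binomial coefficients emerge from standard $q$-Vandermonde manipulations.

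\textbf{RII and RIII.} With (i)--(iv) in hand, RII invariance follows by expanding both crossings via Definition \ref{sl-N-poly-def}, obtaining a double sum indexed by the resolution parameters $(k,k')$; applying (ii) to each summand and a $q$-Vandermonde summation collapses everything except the trivial parallel pair, and the reversed orientation is handled by swapping $m\leftrightarrow n$. For RIII I would expand the three crossings on each side and repeatedly apply (iv), (iii), and (ii) to bring both sides to a common normal form for triple-crossing MOY graphs; this is the classical Murakami-Ohtsuki-Yamada manipulation. The extra factor $q^{\mathrm{rot}(\varphi)}$ of the present normalization is compatible because, by Lemma \ref{lemma-total-rot-constant}, the local rotation contribution changes identically on the two sides of each move.

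\textbf{RI and the framing shift.} The bracket is not RI-invariant. Expanding a positive kink of color $m$ via Definition \ref{sl-N-poly-def} gives a sum over $k$ of a straight strand decorated by a color-$k$ bubble; each term is evaluated by (ii), and a Vandermonde-type $q$-binomial identity collapses the sum to a multiple of the straight strand whose coefficient is exactly $\mathsf{s}(c)^{-1}$ in the positive-kink case. The negative kink is analogous. Consequently multiplying by $\prod_c \mathsf{s}(c)$ in the definition of $\mathsf{P}_N$ cures RI while preserving the RII and RIII invariance already obtained, since the shifting factor for each crossing depends only on the colors meeting at that crossing, not on the nearby diagram.

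\textbf{Main obstacle.} The heart of the argument is the square/MOY-IV relation (iv): it is the universal local identity from which essentially all of the RII and RIII cancellations descend, and its verification at arbitrary colors requires a careful $q$-binomial bookkeeping that is particularly sensitive to the vertex weight and rotation factor used here. Once (iv) is proven, everything else reduces to mechanical application of $q$-Vandermonde and the digon rule (ii).
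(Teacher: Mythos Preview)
The paper does not prove this statement; it is quoted from \cite{MOY} with no argument given, so there is nothing in-paper to compare against. Your sketch is precisely the Murakami--Ohtsuki--Yamada approach: derive the basic local MOY relations (circle, digon, associativity, square) from the state sum \eqref{MOY-bracket-def}, then expand the crossings via Definition~\ref{sl-N-poly-def} and use those relations to check RII and RIII, and finally compute the RI kink factor and cancel it against $\mathsf{s}(c)$. That is the correct route, and your identification of the square relation (iv) as the crux is accurate.

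Two points a full write-up would need to handle. First, ``the reversed orientation is handled by swapping $m\leftrightarrow n$'' does not cover the antiparallel Reidemeister~II move, since the expansions in Definition~\ref{sl-N-poly-def} are stated with both strands upward; one either proves an antiparallel version separately or reduces it to the parallel case plus RIII. Second, to pass from invariance of $\langle D\rangle_N$ to invariance of $\mathsf{P}_N(D)$ under RII and RIII you should say explicitly why $\prod_c \mathsf{s}(c)$ is itself unchanged by those moves: in RII the two crossings involved have the same ordered color pair and opposite sign, hence inverse shifting factors, while RIII involves the same three crossings (same signs, same color pairs) on both sides.
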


\subsection{The colored $\mathfrak{sl}(N)$ link homology} For a positive integer $N$ and an MOY graph $\Gamma$, I defined in \cite{Wu-color} a $\zed$-graded homology $H_N(\Gamma)$ whose graded dimension is $\left\langle \Gamma \right\rangle_N$. We call this grading the quantum grading of $H_N(\Gamma)$. 

Next we give $H_N(\Gamma)$ a homological grading such that the whole of $H_N(\Gamma)$ has homological grading $0$. This makes it a $\zed^{\oplus2}$-graded space.

For the resolution of a crossing, define
\begin{equation}\label{eq-h-grading-def-+}
\mathsf{s}_{h,N}\left(;\right) = \begin{cases}
-k & \text{if } m=n,\\
m-k & \text{if } m\neq n,
\end{cases}
\end{equation}
\begin{equation}\label{eq-h-grading-def--}
\mathsf{s}_{h,N}\left(;\right) = \begin{cases}
k & \text{if } m=n,\\
k-m & \text{if } m\neq n,
\end{cases}
\end{equation}
\begin{equation}\label{eq-q-grading-def-+}
\mathsf{s}_{q,N}\left(;\right) = \begin{cases}
k-m+m(N+1-m) & \text{if } m=n,\\
k-m & \text{if } m\neq n,
\end{cases}
\end{equation}
\begin{equation}\label{eq-q-grading-def--}
\mathsf{s}_{q,N}\left(;\right) = \begin{cases}
m-k-m(N+1-m) & \text{if } m=n,\\
m-k & \text{if } m\neq n.
\end{cases}
\end{equation}

For a link diagram $D$ colored by non-negative integers and an MOY resolution $\Gamma$ of $D$, define $\mathsf{s}_{h,N}(D;\Gamma)$ to be the sum of the values of $\mathsf{s}_{h,N}$ over all crossings of $D$ and define $\mathsf{s}_{q,N}(D;\Gamma)$ to be the sum of the values of $\mathsf{s}_{q,N}$ over all crossings of $D$. 

Denote by $H_N(\Gamma)\|\mathsf{s}_{h,N}(D;\Gamma)\|\{q^{\mathsf{s}_{q,N}(D;\Gamma)}\}$ the space obtained from $H_N(\Gamma)$ by shifting its homological grading by $\mathsf{s}_{h,N}(D;\Gamma)$ and shifting its quantum grading by $\mathsf{s}_{q,N}(D;\Gamma)$.

\begin{theorem}\label{thm-sl-N-homology}\cite{Wu-color}
Let $D$ be a link diagram colored by non-negative integers. Then one can equip the $\zed^{\oplus2}$-graded space 
\[
\bigoplus_{\Gamma \in \mathcal{R}(D)} H_N(\Gamma)\|\mathsf{s}_{h,N}(D;\Gamma)\|\{q^{\mathsf{s}_{q,N}(D;\Gamma)}\}
\]
with a homogeneous differential map of quantum grading $0$ and homological grading $1$ so that the homology of this chain complex, with its $\zed^{\oplus2}$-grading, is invariant under all Reidemeister moves.

We denote this invariant link homology by $H_N$. From the form of the above chain complex, one can see that the graded Euler characteristic of $H_N(D)$ is the normalized Reshetikhin-Turaev $\mathfrak{sl}(N)$ polynomial $\mathsf{P}_N(D)$.
\end{theorem}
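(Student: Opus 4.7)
The plan is to follow the Khovanov--Rozansky categorification strategy adapted to the MOY resolution framework. The overall scheme is to construct differentials between adjacent MOY resolutions, assemble them into a total complex, verify $d^2=0$, and then check Reidemeister invariance.

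First I would construct local edge maps. Fix a crossing $c$ of $D$ with strands colored $m$ and $n$; two MOY resolutions $\Gamma_k$ and $\Gamma_{k+1}$ that differ only at $c$ (with the resolution index jumping by one) are related by a local modification of the underlying colored trivalent graph. The matrix factorization model used to construct $H_N(\Gamma)$ in \cite{Wu-color} provides canonical chain maps between the homologies of these neighbouring graphs, and after the shifts $\mathsf{s}_{h,N}$ and $\mathsf{s}_{q,N}$ are applied these become homogeneous of bidegree $(1,0)$ in the $(h,q)$-grading. Summing the local edge maps over all crossings of $D$, with the usual sign conventions that make every two-crossing square anticommutative, yields the claimed differential $d$. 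Squaring to zero then reduces to the fact that modifications at distinct crossings act on disjoint local pieces of the graph and therefore strictly commute.

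The shifts $\mathsf{s}_{h,N}$, $\mathsf{s}_{q,N}$, and $\mathsf{s}(c)$ are chosen precisely so that $d$ has bidegree $(1,0)$ and the graded Euler characteristic of the total complex reproduces the skein formula of Definition \ref{sl-N-poly-def}. Combining $\sum_j q^j \dim H_N^j(\Gamma) = \langle \Gamma\rangle_N$ with the definitions of the shifts, a direct alternating-sum computation recovers $\langle D\rangle_N$, and the multiplicative factor $\prod_c \mathsf{s}(c)$ then gives $\mathsf{P}_N(D)$.

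The main obstacle is Reidemeister invariance. For each colored Reidemeister move I, II, III, the plan is to exhibit an explicit strong deformation retract between the complexes on the two sides of the move. The standard technique is Gaussian elimination: categorifications of the MOY relations (digon removal, square removal, and the associativity relations for trivalent vertices) provide acyclic direct summands that can be canceled, reducing both complexes to a common simpler form. For R1 the invariance only holds up to the shift factor $\mathsf{s}(c)$, which is designed precisely to absorb the framing contribution. R3 is the most delicate case in practice and requires careful tracking of signs, degree shifts, and which MOY relations one uses to reduce the two twelve-term complexes to a common nine-term skeleton; since the underlying matrix factorization invariance has already been established in \cite{Wu-color}, however, the work here is largely organizational rather than conceptual.
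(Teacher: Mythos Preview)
This theorem is not proved in the present paper at all: it is simply quoted from \cite{Wu-color}, as the citation in the theorem heading indicates, and no argument is supplied here. So there is no ``paper's own proof'' against which to compare your proposal.

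That said, your sketch is a fair high-level summary of the Khovanov--Rozansky/Wu strategy used in \cite{Wu-color}: build local maps of matrix factorizations between adjacent MOY resolutions, assemble a cube-like complex with appropriate signs, and verify Reidemeister invariance via categorified MOY relations and Gaussian elimination. But be aware that what you have written is only an outline, not a proof. The genuinely hard content---constructing the matrix-factorization morphisms in the colored setting, checking they are homogeneous of the correct bidegree after the shifts, and carrying out the (lengthy) invariance arguments for colored Reidemeister moves---is precisely the substance of \cite{Wu-color} and cannot be reproduced in a paragraph. In particular, phrases like ``the work here is largely organizational rather than conceptual'' understate the difficulty: the colored R2 and R3 invariance proofs in \cite{Wu-color} occupy many pages and rely on nontrivial decomposition theorems for matrix factorizations of colored MOY graphs that go well beyond the $1,2$-colored case of \cite{KR1}. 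If you were asked to supply a proof here, the honest answer is simply to cite \cite{Wu-color}.
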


\section{Proof of The Composition Product}\label{sec-composition-product}

Jaeger \cite{Jaeger-composition} proved his composition product formula by showing that the composition product satisfies the skein relation that uniquely characterizes the HOMFLY-PT polynomial. Similarly, Wagner \cite{Wagner-composition} proved his composition product formula by showing that the composition product satisfies the MOY relations that uniquely characterizes the $1,2$-colored MOY graph polynomial. The proof of Theorem \ref{THM-composition-product} would be rather lengthy if we use a direct generalization of their approach. Fortunately, the composition product \eqref{eq-composition-product} in Theorem \ref{THM-composition-product} is a simple corollary of the MOY state sum formula \eqref{MOY-bracket-def}, which makes the proof a lot easier. In fact, it is not hard to see that \eqref{eq-composition-product} and \eqref{MOY-bracket-def} are actually equivalent to each other.

\begin{definition}\label{def-MOY-state-split}
Let $\Gamma$ be an MOY graph, and $M$, $N$ two positive integers. For a state $\varphi \in \mathcal{S}_{M+N}(\Gamma)$, define $\varphi_1:E(\Gamma) \rightarrow \mathcal{P}(\Sigma_M)$ by 
\[
\varphi_1(e) = \{k~|~k-N \in \varphi(e)\cap \{2k-M-N+1~|~k=0,1,\dots,M-1\}\}
\]
and $\varphi_2:E(\Gamma) \rightarrow \mathcal{P}(\Sigma_N)$ by 
\[
\varphi_2(e) = \{k~|~k+M \in \varphi(e)\cap \{2k-M-N+1~|~k=M,M+1,\dots,M+N-1\}\}.
\]
Moreover, define $\mathsf{f}_\varphi:E(\Gamma)\rightarrow \zed_{\geq 0}$ by $\mathsf{f}_\varphi(e)=\#\varphi_1(e)$. 

It is easy to see that $\mathsf{f}_\varphi \in \mathcal{L}(\Gamma)$, $\varphi_1 \in \mathcal{S}_{M}(\Gamma_{\mathsf{f}_\varphi})$ and $\varphi_2 \in \mathcal{S}_{N}(\Gamma_{\bar{\mathsf{f}}_\varphi})$.

For $\mathsf{f} \in \mathcal{L}(\Gamma)$, define $\mathcal{S}_{M+N}^{\mathsf{f}}(\Gamma) = \{\varphi \in \mathcal{S}_{M+N}(\Gamma)~|~ \mathsf{f}_\varphi = \mathsf{f}\}$.
\end{definition}

\begin{lemma}\label{lemma-MOY-state-split-decomp-1}
\[
\mathcal{S}_{M+N}(\Gamma) = \bigsqcup_{\mathsf{f} \in \mathcal{L}(\Gamma)} \mathcal{S}_{M+N}^{\mathsf{f}}(\Gamma)
\]
and therefore
\begin{equation}\label{eq-MOY-state-sum-split}
\left\langle \Gamma \right\rangle_{M+N} = \sum_{\mathsf{f} \in \mathcal{L}(\Gamma)} \sum_{\varphi \in \mathcal{S}_{M+N}^\mathsf{f}(\Gamma)} (\prod_{v \in V(\Gamma)} \mathrm{wt}(v;\varphi)) q^{\mathrm{rot}(\varphi)}.
\end{equation}
\end{lemma}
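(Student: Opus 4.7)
The plan is to reduce the lemma to a single claim: the assignment $\varphi \mapsto \mathsf{f}_\varphi$ from Definition \ref{def-MOY-state-split} sends every state $\varphi \in \mathcal{S}_{M+N}(\Gamma)$ to an element of $\mathcal{L}(\Gamma)$. Once this is established, the disjoint decomposition follows tautologically: every $\varphi$ belongs to $\mathcal{S}_{M+N}^{\mathsf{f}}(\Gamma)$ precisely when $\mathsf{f} = \mathsf{f}_\varphi$, which delivers both the union and the disjointness. The state-sum identity \eqref{eq-MOY-state-sum-split} then drops out of \eqref{MOY-bracket-def} by regrouping the summands according to the value of $\mathsf{f}_\varphi$.

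The central verification is therefore to confirm both conditions of Definition \ref{def-MOY-label} for $\mathsf{f}_\varphi$. The inequality $\mathsf{f}_\varphi(e) \leq \mathsf{c}(e)$ is immediate, since under the shift $k \mapsto k-N$ the set $\varphi_1(e)$ is in bijection with $\varphi(e) \cap \{2k-M-N+1 \mid k = 0, 1, \dots, M-1\}$, a subset of $\varphi(e)$, so $\mathsf{f}_\varphi(e) = \#\varphi_1(e) \leq \#\varphi(e) = \mathsf{c}(e)$. For the MOY vertex condition, fix a vertex $v$ of either type in Figure \ref{fig-MOY-vertex} with edges $e, e_1, e_2$. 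The state condition $\varphi(e) = \varphi(e_1) \sqcup \varphi(e_2)$ intersects the ``lower window'' $\{2k-M-N+1 \mid k = 0, 1, \dots, M-1\}$ to give a disjoint decomposition of (the shifted copy of) $\varphi_1(e)$; taking cardinalities yields $\mathsf{f}_\varphi(e) = \mathsf{f}_\varphi(e_1) + \mathsf{f}_\varphi(e_2)$. This is precisely the MOY coloring relation at $v$ needed for the underlying trivalent graph of $\Gamma$, so $\mathsf{f}_\varphi \in \mathcal{L}(\Gamma)$.

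The argument is essentially bookkeeping, and the only genuine attention point is tracking the affine shifts between $\Sigma_{M+N}$ and the pair $\Sigma_M, \Sigma_N$ used in Definition \ref{def-MOY-state-split}. I do not anticipate any substantive obstacle; the real work of the paper will come in Section \ref{sec-composition-product} when these pieces are combined with Lemma \ref{lemma-total-rot-constant} and the weight identities at vertices to produce the exponent $\sigma_{M,N}(\Gamma,\mathsf{f})$ of Theorem \ref{THM-composition-product}.
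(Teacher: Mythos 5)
Your proposal is correct and spells out exactly the bookkeeping that the paper leaves to the reader (its proof of this lemma is simply ``follows easily from the relevant definitions''). The only thing worth noting is that the fact you verify --- that $\mathsf{f}_\varphi \in \mathcal{L}(\Gamma)$ for every $\varphi \in \mathcal{S}_{M+N}(\Gamma)$ --- is already asserted as ``easy to see'' at the end of Definition \ref{def-MOY-state-split}, so the lemma itself reduces to observing that a well-defined map $\varphi \mapsto \mathsf{f}_\varphi$ into $\mathcal{L}(\Gamma)$ partitions $\mathcal{S}_{M+N}(\Gamma)$ into its fibers (some of which may be empty), and that regrouping the sum in \eqref{MOY-bracket-def} accordingly gives \eqref{eq-MOY-state-sum-split}; your verification of the two conditions in Definition \ref{def-MOY-label} for $\mathsf{f}_\varphi$ is precisely the content being glossed over.
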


\begin{proof}
This lemma follows easily from the relevant definitions. We leave the details to the reader.
\end{proof}

\begin{lemma}\label{lemma-MOY-state-split-decomp-2}
For any $\mathsf{f} \in \mathcal{L}(\Gamma)$, the function $\mathcal{S}_{M+N}^{\mathsf{f}}(\Gamma) \rightarrow \mathcal{S}_{M}(\Gamma_{\mathsf{f}}) \times \mathcal{S}_{N}(\Gamma_{\bar{\mathsf{f}}})$ given by $\varphi \mapsto (\varphi_1,\varphi_2)$ is a bijection. Moreover, for any $\varphi \in \mathcal{S}_{M+N}^\mathsf{f}(\Gamma)$, 
\begin{eqnarray}
\label{eq-MOY-split-rot} \mathrm{rot}(\varphi) & = & \mathrm{rot}(\varphi_1) + \mathrm{rot}(\varphi_2) -N \cdot \mathrm{rot}(\Gamma_{\mathsf{f}}) +M \cdot \mathrm{rot}(\Gamma_{\bar{\mathsf{f}}}), \\
\label{eq-MOY-split-wt} \mathrm{wt}(v;\varphi) & = & \mathrm{wt}(v;\varphi_1)\cdot \mathrm{wt}(v;\varphi_2)\cdot q^{[v|\Gamma|\mathsf{f}]},
\end{eqnarray}
where $v$ is any vertex of $\Gamma$.
\end{lemma}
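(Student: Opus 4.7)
The plan is to handle the three assertions in order. For the bijection, I would exhibit the explicit inverse: given $(\varphi_1,\varphi_2) \in \mathcal{S}_M(\Gamma_{\mathsf{f}}) \times \mathcal{S}_N(\Gamma_{\bar{\mathsf{f}}})$, set
\[
\varphi(e) = \{\alpha - N \mid \alpha \in \varphi_1(e)\} \cup \{\beta + M \mid \beta \in \varphi_2(e)\}
\]
for each edge $e$. The key observation is that $\Sigma_{M+N}$ decomposes as a disjoint ordered union of a ``lower half'' $L = \{2k-M-N+1 \mid k=0,\dots,M-1\}$ and an ``upper half'' $U = \{2k-M-N+1 \mid k=M,\dots,M+N-1\}$, with every element of $L$ strictly less than every element of $U$, while the shifts $\alpha\mapsto\alpha-N$ and $\beta\mapsto\beta+M$ identify $\Sigma_M \cong L$ and $\Sigma_N \cong U$. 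From this one reads off that the state condition $\varphi(e)=\varphi(e_1)\cup\varphi(e_2)$ at each vertex passes to and from the corresponding conditions on $\varphi_1$ and $\varphi_2$, so the split and recombination operations are mutually inverse.

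For the rotation identity \eqref{eq-MOY-split-rot}, I would use the concrete construction of the circle collection $\mathcal{C}_\varphi$ described just before \eqref{eq-rot-state}. Each edge $e$ of $\Gamma$ yields $\mathsf{f}(e)$ ``lower-labeled'' strands (carrying labels in $L$) and $\bar{\mathsf{f}}(e)$ ``upper-labeled'' strands, and at each vertex only strands with a common label are joined; since no label in $L$ matches a label in $U$, the collection $\mathcal{C}_\varphi$ partitions into sub-collections that can be canonically identified, via the label shifts above, with $\mathcal{C}_{\varphi_1}$ and $\mathcal{C}_{\varphi_2}$ respectively. Substituting $\varphi(C)=\varphi_1(C)-N$ or $\varphi(C)=\varphi_2(C)+M$ into the definition \eqref{eq-rot-state} of $\mathrm{rot}(\varphi)$ and invoking Lemma \ref{lemma-total-rot-constant} to evaluate the two resulting ``total rotation'' sums as $\mathrm{rot}(\Gamma_{\mathsf{f}})$ and $\mathrm{rot}(\Gamma_{\bar{\mathsf{f}}})$ then produces the desired formula.

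The main obstacle is the vertex-weight identity \eqref{eq-MOY-split-wt}, since $\pi$ does not split naively. At a vertex $v$, I would decompose $\varphi(e_i) = A_i^\ell \sqcup A_i^u$ according to the $L/U$-split, so that $\# A_i^\ell = \mathsf{f}(e_i)$ and $\# A_i^u = \bar{\mathsf{f}}(e_i)$. The bilinearity of $\pi$ together with the fact that $L<U$ pointwise gives
\[
\pi(\varphi(e_1),\varphi(e_2)) = \pi(A_1^\ell,A_2^\ell) + \pi(A_1^u,A_2^u) + \bar{\mathsf{f}}(e_1)\mathsf{f}(e_2),
\]
because $\pi(A_1^\ell, A_2^u) = 0$ and $\pi(A_1^u, A_2^\ell) = \#A_1^u \cdot \#A_2^\ell$. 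The defining shifts are order-preserving, so $\pi(A_1^\ell,A_2^\ell) = \pi(\varphi_1(e_1),\varphi_1(e_2))$ and similarly for the $u$-parts. Comparing the exponents demanded by \eqref{eq-weight-vertex}, the identity \eqref{eq-MOY-split-wt} then reduces to the purely algebraic equality
\[
\tfrac{\mathsf{c}(e_1)\mathsf{c}(e_2)}{2} - \bar{\mathsf{f}}(e_1)\mathsf{f}(e_2) = \tfrac{\mathsf{f}(e_1)\mathsf{f}(e_2) + \bar{\mathsf{f}}(e_1)\bar{\mathsf{f}}(e_2)}{2} + [v|\Gamma|\mathsf{f}],
\]
which follows from a one-line expansion using $\mathsf{c}(e_i) = \mathsf{f}(e_i) + \bar{\mathsf{f}}(e_i)$ and the definition of $[v|\Gamma|\mathsf{f}]$ in Definition \ref{def-MOY-label}. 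The merge vertex is handled identically, since \eqref{eq-weight-vertex} and the convention that $e_1$ lies to the left of $e_2$ are common to both vertex types.
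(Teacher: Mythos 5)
Your proof is correct and follows essentially the same route as the paper: split $\Sigma_{M+N}$ into the lower/upper halves, observe that the order-preserving shifts identify these with $\Sigma_M$ and $\Sigma_N$, split $\pi$ accordingly with the cross-terms contributing $\bar{\mathsf{f}}(e_1)\mathsf{f}(e_2)$, and finish with the algebraic identity from $\mathsf{c}=\mathsf{f}+\bar{\mathsf{f}}$. The only difference is that you spell out the bijection (via an explicit inverse) and the rotation identity (via the lower/upper partition of $\mathcal{C}_\varphi$) in detail, whereas the paper dismisses those two parts as clear; your weight computation matches the paper's almost verbatim.
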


\begin{proof}
By the definition of $\varphi_1$ and $\varphi_2$, it is clear that $\varphi \mapsto (\varphi_1,\varphi_2)$ gives a bijection $\mathcal{S}_{M+N}^{\mathsf{f}}(\Gamma) \rightarrow \mathcal{S}_{M}(\Gamma_{\mathsf{f}}) \times \mathcal{S}_{N}(\Gamma_{\bar{\mathsf{f}}})$. Equation \eqref{eq-MOY-split-rot} follows easily from Lemma \ref{lemma-total-rot-constant} and the definitions of rotation numbers (equations \eqref{eq-def-rot-MOY} and \eqref{eq-rot-state}.) It remains to prove \eqref{eq-MOY-split-wt}.

Denote by $\mathsf{c}$ the color function of $\Gamma$. Then $\mathsf{c}= \mathsf{f} + \bar{\mathsf{f}}$. Let $v$ be a vertex of $\Gamma$ as shown in Figure \ref{fig-MOY-vertex}. Recall that
\begin{eqnarray*}
\mathrm{wt}(v;\varphi) & = & q^{\frac{\mathsf{c}(e_1)\mathsf{c}(e_2)}{2} - \pi(\varphi(e_1),\varphi(e_2))}, \\
\mathrm{wt}(v;\varphi_1) & = & q^{\frac{\mathsf{f}(e_1)\mathsf{f}(e_2)}{2} - \pi(\varphi_1(e_1),\varphi_1(e_2))}, \\
\mathrm{wt}(v;\varphi_2) & = & q^{\frac{\bar{\mathsf{f}}(e_1)\bar{\mathsf{f}}(e_2)}{2} - \pi(\varphi_2(e_1),\varphi_2(e_2))},
\end{eqnarray*}
where $\pi$ is defined in \eqref{eq-def-pi}. Let 
\begin{eqnarray*}
\Sigma' & = & \{2k-M-N+1~|~k=0,1,\dots,M-1\}, \\
\Sigma'' & = & \{2k-M-N+1~|~k=M,M+1,\dots,M+N-1\}.
\end{eqnarray*}
Then $\Sigma_{M+N}= \Sigma' \sqcup \Sigma''$ and 
\begin{eqnarray*}
&& \pi(\varphi(e_1),\varphi(e_2)) \\
& = & \pi((\varphi(e_1)\cap \Sigma') \sqcup (\varphi(e_1)\cap \Sigma''),(\varphi(e_2)\cap \Sigma') \sqcup (\varphi(e_2)\cap \Sigma'')) \\
& = & \pi(\varphi(e_1)\cap \Sigma',\varphi(e_2)\cap \Sigma') + \pi(\varphi(e_1)\cap \Sigma'',\varphi(e_2)\cap \Sigma'') \\
& & + ~\pi(\varphi(e_1)\cap \Sigma'',\varphi(e_2)\cap \Sigma') + \pi(\varphi(e_1)\cap \Sigma',\varphi(e_2)\cap \Sigma'') \\
& = & \pi(\varphi_1(e_1),\varphi_1(e_2)) + \pi(\varphi_2(e_1),\varphi_2(e_2)) + \bar{\mathsf{f}}(e_1)\mathsf{f}(e_2).
\end{eqnarray*}
Thus,
\begin{eqnarray*}
&& (\frac{\mathsf{c}(e_1)\mathsf{c}(e_2)}{2} - \pi(\varphi(e_1),\varphi(e_2))) - (\frac{\mathsf{f}(e_1)\mathsf{f}(e_2)}{2} - \pi(\varphi_1(e_1),\varphi_1(e_2))) \\
&& - ~(\frac{\bar{\mathsf{f}}(e_1)\bar{\mathsf{f}}(e_2)}{2} - \pi(\varphi_2(e_1),\varphi_2(e_2))) \\
& = & \frac{(\mathsf{f}(e_1) + \bar{\mathsf{f}}(e_1))(\mathsf{f}(e_2) + \bar{\mathsf{f}}(e_2)) - \mathsf{f}(e_1)\mathsf{f}(e_2) - \bar{\mathsf{f}}(e_1)\bar{\mathsf{f}}(e_2)}{2} - \bar{\mathsf{f}}(e_1)\mathsf{f}(e_2) \\
& = & \frac{1}{2} (\mathsf{f}(e_1)\bar{\mathsf{f}}(e_2) - \bar{\mathsf{f}}(e_1)\mathsf{f}(e_2)) = [v|\Gamma|\mathsf{f}].
\end{eqnarray*}
This proves \eqref{eq-MOY-split-wt}.
\end{proof}

Theorem \ref{THM-composition-product} follows easily from Lemmas \ref{lemma-MOY-state-split-decomp-1} and \ref{lemma-MOY-state-split-decomp-2}.

\begin{proof}[Proof of Theorem \ref{THM-composition-product}]
By Lemmas \ref{lemma-MOY-state-split-decomp-1} and \ref{lemma-MOY-state-split-decomp-2},
\begin{eqnarray*}
&& \left\langle \Gamma \right\rangle_{M+N} \\
& = & \sum_{\mathsf{f} \in \mathcal{L}(\Gamma)} \sum_{\varphi \in \mathcal{S}_{M+N}^\mathsf{f}(\Gamma)} (\prod_{v \in V(\Gamma)} \mathrm{wt}(v;\varphi)) q^{\mathrm{rot}(\varphi)} \\
& = & \sum_{\mathsf{f} \in \mathcal{L}(\Gamma)} \sum_{\varphi \in \mathcal{S}_{M+N}^\mathsf{f}(\Gamma)} (\prod_{v \in V(\Gamma)} \mathrm{wt}(v;\varphi_1)\cdot \mathrm{wt}(v;\varphi_2)\cdot q^{[v|\Gamma|\mathsf{f}]}) q^{\mathrm{rot}(\varphi_1) + \mathrm{rot}(\varphi_2) -N \cdot \mathrm{rot}(\Gamma_{\mathsf{f}}) +M \cdot \mathrm{rot}(\Gamma_{\bar{\mathsf{f}}})} \\
& = & \sum_{\mathsf{f} \in \mathcal{L}(\Gamma)} q^{\sigma_{M,N}(\Gamma,\mathsf{f})} \sum_{(\varphi_1,\varphi_2) \in \mathcal{S}_{M}(\Gamma_{\mathsf{f}}) \times \mathcal{S}_{N}(\Gamma_{\bar{\mathsf{f}}})} (\prod_{v \in V(\Gamma)} \mathrm{wt}(v;\varphi_1)\cdot q^{\mathrm{rot}(\varphi_1)}) \cdot (\prod_{v \in V(\Gamma)} \mathrm{wt}(v;\varphi_2)\cdot q^{\mathrm{rot}(\varphi_2)}) \\
& = & \sum_{\mathsf{f} \in \mathcal{L}(\Gamma)} q^{\sigma_{M,N}(\Gamma,\mathsf{f})} \cdot \left\langle \Gamma_{\mathsf{f}} \right\rangle_M \cdot \left\langle \Gamma_{\bar{\mathsf{f}}} \right\rangle_N.
\end{eqnarray*}
\end{proof}

\begin{remark}
The above proof shows that the MOY state sum formula \eqref{MOY-bracket-def} implies the composition product \eqref{eq-composition-product}. Now consider $\left\langle \Gamma \right\rangle_{N}$ as $\left\langle \Gamma \right\rangle_{\underbrace{1+\cdots+1}_{N ~1's}}$ and use the composition product \eqref{eq-composition-product} repeatedly. This give a state sum formula of $\left\langle \Gamma \right\rangle_{N}$ in terms of $\left\langle \right\rangle_{1}$ of a family of simple MOY graphs, each of which is a collection of embedded circles colored by $1$. (A special case of the formula is given in \cite{Wagner-composition}.) It is not very hard to see that this state sum formula is exactly the MOY state sum formula \eqref{MOY-bracket-def}. Thus \eqref{eq-composition-product} and \eqref{MOY-bracket-def} are equivalent.
\end{remark}

\section{Proof of the Colored Morton-Franks-Williams Inequalities}\label{sec-color-MFW-proof}

We prove in this section Proposition \ref{Prop-colored-MFW-sl-N}, which implies the colored Morton-Franks-Williams inequalities \eqref{eq-colored-MFW-sl-N-limit-special}, \eqref{eq-colored-MFW-sl-N-limit-general}, \eqref{eq-colored-MFW-sl-N-limit-special-poly} or \eqref{eq-colored-MFW-sl-N-limit-general-poly}. We do so by establish upper and lower bounds for the degree of the $\mathfrak{sl}(N)$ MOY graph polynomial of a special type of MOY graphs, which we call MOY tracks.

\subsection{MOY tracks}

\begin{definition}\label{def-MOY-tracks}
Let $b$ be a positive integer. Assume $0\leq x_1<\cdots<x_b \leq 1$. An MOY track of $b$ strands is an MOY graph $\Gamma$ satisfying:
\begin{enumerate}[1.]
	\item The part of $\Gamma$ outside $[0,1]\times[0,1]$ consists of $b$ edges such that, for each $i=1,\dots,b$, one of these $b$ edges connects $(x_i,0)$ to $(x_i,1)$ via the right side of $[0,1]\times[0,1]$.
	\item The part of $\Gamma$ inside $[0,1]\times[0,1]$ consists of vertical and horizontal edges only.
	\item All vertical edges of $\Gamma$ inside $[0,1]\times[0,1]$ point downward.
	\item The union of all vertical edges of $\Gamma$ inside $[0,1]\times[0,1]$, as a point set, is the set $\{x_1,\dots,x_b\}\times [0,1]$.
	\item Each horizontal edge inside $[0,1]\times[0,1]$ starts and ends on adjacent vertical edges inside $[0,1]\times[0,1]$.
\end{enumerate}
\end{definition}

Note that the MOY resolutions of a braid diagram colored by non-negative integers are all MOY tracks.

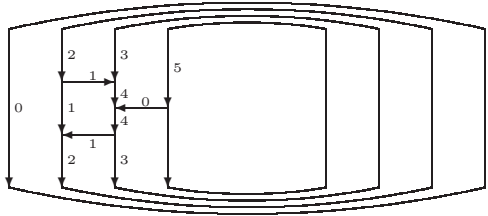
\begin{figure}[ht]
\[
\xymatrix{
\setlength{\unitlength}{1pt}
\begin{picture}(180,100)(0,-20)


\put(0,60){\vector(0,-1){60}}

\put(20,60) {\vector(0,-1){20}}

\put(20,40) {\vector(0,-1){20}}

\put(20,20) {\vector(0,-1){20}}

\put(40,60) {\vector(0,-1){20}}

\put(40,40) {\vector(0,-1){10}}

\put(40,30) {\vector(0,-1){10}}

\put(40,20) {\vector(0,-1){20}}

\put(60,60) {\vector(0,-1){30}}

\put(60,30) {\vector(0,-1){30}}


\put(20,40) {\vector(1,0){20}}

\put(40,20) {\vector(-1,0){20}}

\put(60,30) {\vector(-1,0){20}}


\put(120,0) {\line(0,1){60}}

\put(140,0) {\line(0,1){60}}

\put(160,0) {\line(0,1){60}}

\put(180,0) {\line(0,1){60}}

\qbezier(60,0)(90,-5)(120,0)

\qbezier(40,0)(90,-10)(140,0)

\qbezier(20,0)(90,-15)(160,0)

\qbezier(0,0)(90,-20)(180,0)

\qbezier(60,60)(90,65)(120,60)

\qbezier(40,60)(90,70)(140,60)

\qbezier(20,60)(90,75)(160,60)

\qbezier(0,60)(90,80)(180,60)


\put(2,30){\tiny{$_{0}$}}

\put(22,50){\tiny{$_{2}$}}

\put(22,30){\tiny{$_{1}$}}

\put(22,10){\tiny{$_{2}$}}

\put(30,42){\tiny{$_{1}$}}

\put(30,16){\tiny{$_{1}$}}

\put(42,50){\tiny{$_{3}$}}

\put(42,10){\tiny{$_{3}$}}

\put(42,35){\tiny{$_{4}$}}

\put(42,25){\tiny{$_{4}$}}

\put(50,32){\tiny{$_{0}$}}

\put(62,45){\tiny{$_{5}$}}

\end{picture}
}
\]
\caption{An MOY track of $4$ strands}\label{fig-MOY-track}
\end{figure}

The goal of this subsection is to prove Proposition \ref{prop-bounds-MOY-tracks}, which gives us upper and lower bounds for the degree of the $\mathfrak{sl}(N)$ MOY graph polynomial of an MOY track. For simplicity, we introduce the following notations.

\begin{definition}\label{def-rho}
Let $\Gamma$ be an MOY graph and $v$ a vertex of $\Gamma$ as shown in Figure \ref{fig-MOY-vertex}. Define $\rho_{\Gamma}(v)=\frac{mn}{2}$ and 
\[
\rho(\Gamma) = \sum_{v\in V(\Gamma)} \rho_{\Gamma}(v).
\]
\end{definition}

\begin{proposition}\label{prop-bounds-MOY-tracks}
Let $\Gamma$ be an MOY track of $b$ strands colored by $0,1,\dots,N$. Then 
\begin{equation}\label{eq-bounds-MOY-tracks}
-\mathrm{rot}(\Gamma)(N-\frac{\mathrm{rot}(\Gamma)}{b})-\rho(\Gamma) \leq \min\deg_q \left\langle \Gamma \right\rangle_N \leq \max\deg_q \left\langle \Gamma \right\rangle_N \leq \mathrm{rot}(\Gamma)(N-\frac{\mathrm{rot}(\Gamma)}{b}) + \rho(\Gamma).
\end{equation}
\end{proposition}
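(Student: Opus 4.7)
The plan is to read the degree bounds off the state-sum formula \eqref{MOY-bracket-def}. For each state $\varphi\in\mathcal{S}_N(\Gamma)$, the exponent of $q$ in the corresponding monomial of $\langle\Gamma\rangle_N$ is
\[
\alpha(\varphi)\;=\;\rho(\Gamma)\;-\;\sum_{v\in V(\Gamma)}\pi\bigl(\varphi(e_1^v),\varphi(e_2^v)\bigr)\;+\;\mathrm{rot}(\varphi).
\]
All coefficients are positive, so $\max\deg_q\langle\Gamma\rangle_N=\max_\varphi\alpha(\varphi)$ and $\min\deg_q\langle\Gamma\rangle_N=\min_\varphi\alpha(\varphi)$. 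Applied term-by-term at each vertex, $0\le\pi(A_1,A_2)\le|A_1||A_2|$ traps $\sum_v\bigl(\tfrac{m_vn_v}{2}-\pi\bigr)$ inside $[-\rho(\Gamma),\rho(\Gamma)]$, so the problem reduces to proving $|\mathrm{rot}(\varphi)|\le\mathrm{rot}(\Gamma)(N-\mathrm{rot}(\Gamma)/b)$ for every $\varphi$.

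The main structural ingredient is a lemma about MOY tracks: every circle $C\in\mathcal{C}_\varphi$ traverses exactly one of the $b$ outer arcs. Since interior vertical edges only point downward, each $C$ must use at least one outer arc. If, for contradiction, $C$ used distinct outer arcs at columns $x_i$ and $x_j$, then inside $[0,1]^2$ the circle would decompose into two embedded descending arcs, one from $(x_i,1)$ to $(x_j,0)$ and another from $(x_j,1)$ to $(x_i,0)$. A planar argument, using only that the MOY track is built from vertical and horizontal edges between adjacent strands, shows such arcs must share a vertical segment at some column $x_k$ with $i\le k\le j$; since both belong to the same circle $C$, they would be forced onto the single parallel copy labeled $\varphi(C)$ at $x_k$, contradicting embeddedness. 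Once the one-arc property is in hand, each circle is planarly isotopic to a simple counterclockwise loop around the box, so $\mathrm{rot}(C)=+1$, and Lemma \ref{lemma-total-rot-constant} yields $\mathrm{rot}(\Gamma)=|\mathcal{C}_\varphi|=m_1+\cdots+m_b$ where $m_i=\mathsf{c}(e_i)$.

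With the lemma in hand the rest is short. Since every circle lives over a unique outer edge and has $\mathrm{rot}(C)=+1$,
\[
\mathrm{rot}(\varphi)\;=\;\sum_{C\in\mathcal{C}_\varphi}\varphi(C)\;=\;\sum_{i=1}^{b}\;\sum_{a\in\varphi(e_i)}a.
\]
A size-$m_i$ subset of $\Sigma_N=\{-(N-1),-(N-3),\dots,N-1\}$ has sum in $[-m_i(N-m_i),\,m_i(N-m_i)]$, realized by the top and bottom $m_i$ elements. Summing over $i$ gives $|\mathrm{rot}(\varphi)|\le\sum_i m_i(N-m_i)=N\mathrm{rot}(\Gamma)-\sum_i m_i^2$, and the Cauchy--Schwarz inequality $\sum_i m_i^2\ge(\sum_i m_i)^2/b=\mathrm{rot}(\Gamma)^2/b$ converts this into $|\mathrm{rot}(\varphi)|\le\mathrm{rot}(\Gamma)(N-\mathrm{rot}(\Gamma)/b)$. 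Combining with the vertex bound produces \eqref{eq-bounds-MOY-tracks}.

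The only real obstacle is the one-outer-arc lemma; the remainder is bookkeeping plus an elementary Cauchy--Schwarz step. Making the ``two descending arcs inside an MOY track must overlap on some vertical segment'' argument rigorous --- handling arbitrarily many intermediate columns and horizontal edges at various heights, and relating a shared vertical segment to a forced self-intersection via the single $\varphi(C)$-labeled parallel copy --- is the point on which I expect the proof to spend most of its work.
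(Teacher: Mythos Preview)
Your approach is correct and genuinely different from the paper's. The paper proves \eqref{eq-bounds-MOY-tracks} by induction on $N$: the case $N=1$ is trivial (all colors are $0$ or $1$, $\rho(\Gamma)=0$, $\langle\Gamma\rangle_1=1$), and for the inductive step one applies the composition product \eqref{eq-composition-product} with the splitting $N+1=N+1$ to write $\langle\Gamma\rangle_{N+1}=\sum_{\mathsf{f}}q^{\sigma_{N,1}(\Gamma,\mathsf{f})}\langle\Gamma_{\mathsf{f}}\rangle_N\langle\Gamma_{\bar{\mathsf{f}}}\rangle_1$, then checks by direct algebra (using the inequality $|[v|\Gamma|\mathsf{f}]|\le\rho_\Gamma(v)-\rho_{\Gamma_{\mathsf{f}}}(v)-\rho_{\Gamma_{\bar{\mathsf{f}}}}(v)$ and some manipulation of the rotation terms) that the induction hypothesis on each factor propagates to $\langle\Gamma\rangle_{N+1}$. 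Your route is more elementary in that it reads the bounds directly off the state sum \eqref{MOY-bracket-def} and never invokes Theorem~\ref{THM-composition-product}; the paper's route, by contrast, avoids any geometric analysis of the circles in $\mathcal{C}_\varphi$ and instead showcases the composition product, which is one of the paper's principal results.

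On the one-outer-arc lemma you flag as the crux: it is easier than your sketch suggests, and the ``two descending arcs must share a vertical segment'' argument is not needed. For fixed $a\in\Sigma_N$, the edges $e$ with $a\in\varphi(e)$ form a subgraph of $\Gamma$ in which every vertex has degree $0$ or $2$ (since $\varphi(e)=\varphi(e_1)\sqcup\varphi(e_2)$ at each vertex); as $\Gamma$ is embedded in the plane, the $a$-circles are therefore pairwise \emph{disjoint} simple closed curves. Inside $[0,1]^2$, each $a$-arc is monotone in $y$, so the $a$-arcs give a non-crossing matching between the top endpoints $\{(x_i,1):a\in\varphi(e_i)\}$ and the bottom endpoints $\{(x_i,0):a\in\varphi(e_i)\}$. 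But a non-crossing matching by top-to-bottom arcs between two identical ordered finite subsets of the top and bottom sides of a rectangle is forced to be the identity (any arc separates the box into two pieces, each of which must contain equally many top and bottom endpoints). Hence each $a$-arc runs from $(x_i,1)$ to $(x_i,0)$ for the same $i$; each circle uses exactly one outer arc, is simple, and is oriented upward on the far right of the plane, so $\mathrm{rot}(C)=+1$. The rest of your argument (the bound on $\sum_{a\in\varphi(e_i)}a$ and the Cauchy--Schwarz step) then goes through verbatim.
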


\begin{proof}
Note that $\rho(\Gamma) \geq 0$. Also, since $\frac{\mathrm{rot}(\Gamma)}{b}$ is the average of the colors of of the $b$ edges of $\Gamma$ outside $[0,1]\times[0,1]$, we have $0 \leq \frac{\mathrm{rot}(\Gamma)}{b} \leq N$.

We prove inequality \eqref{eq-bounds-MOY-tracks} by an induction on $N$. If $N=1$, then $\Gamma$ is colored by $0,1$. It is then easy to check that $\rho(\Gamma)=0$, $\left\langle \Gamma \right\rangle_N=1$ and therefore, $\min\deg_q \left\langle \Gamma \right\rangle_N = \max\deg_q \left\langle \Gamma \right\rangle_N=0$. This implies that \eqref{eq-bounds-MOY-tracks} holds for $N=1$. 

Now assume \eqref{eq-bounds-MOY-tracks} is true for $N$. Suppose $\Gamma$ is an MOY track colored by $0,1,\dots,N+1$. Using the composition product (Theorem \ref{THM-composition-product},) we get
\begin{equation}\label{eq-proof-bounds-MOY-tracks-1}
\left\langle \Gamma \right\rangle_{N+1} = \sum_{\mathsf{f} \in \mathcal{L}(\Gamma)} q^{\sigma_{N,1}(\Gamma,\mathsf{f})} \cdot \left\langle \Gamma_{\mathsf{f}} \right\rangle_N \cdot \left\langle \Gamma_{\bar{\mathsf{f}}} \right\rangle_1.
\end{equation}
Clearly, for a term $q^{\sigma_{N,1}(\Gamma,\mathsf{f})} \cdot \left\langle \Gamma_{\mathsf{f}} \right\rangle_N \cdot \left\langle \Gamma_{\bar{\mathsf{f}}} \right\rangle_1$ in \eqref{eq-proof-bounds-MOY-tracks-1} to be non-zero, $\mathsf{f}$ must satisfy that $0 \leq \mathsf{f}(e) \leq N$ and $0 \leq \bar{\mathsf{f}}(e) \leq 1$ for all $e \in E(\Gamma)$, which implies that $0 \leq \frac{\mathrm{rot}(\Gamma_{\mathsf{f}})}{b} \leq N$ and $0 \leq \frac{\mathrm{rot}(\Gamma_{\bar{\mathsf{f}}})}{b} \leq 1$. Moreover, by the induction hypothesis, \eqref{eq-bounds-MOY-tracks} is true for $\left\langle \Gamma_{\mathsf{f}} \right\rangle_N$ and $\left\langle \Gamma_{\bar{\mathsf{f}}} \right\rangle_1$.

By direct computation, we get 
\begin{eqnarray*}
&& -\mathrm{rot}(\Gamma)(N+1-\frac{\mathrm{rot}(\Gamma)}{b}) + \mathrm{rot}(\Gamma_{\mathsf{f}})(N-\frac{\mathrm{rot}(\Gamma_{\mathsf{f}})}{b}) + \mathrm{rot}(\Gamma_{\bar{\mathsf{f}}})(1-\frac{\mathrm{rot}(\Gamma_{\bar{\mathsf{f}}})}{b}) \\
& = & -2\cdot \mathrm{rot}(\Gamma_{\bar{\mathsf{f}}})(N-\mathrm{rot}(\Gamma_{\mathsf{f}})) + N\mathrm{rot}(\Gamma_{\bar{\mathsf{f}}}) - \mathrm{rot}(\Gamma_{\mathsf{f}}) \\
& \leq & N\mathrm{rot}(\Gamma_{\bar{\mathsf{f}}}) - \mathrm{rot}(\Gamma_{\mathsf{f}}).
\end{eqnarray*}
So 
\begin{equation}\label{eq-proof-bounds-MOY-tracks-2}
-\mathrm{rot}(\Gamma)(N+1-\frac{\mathrm{rot}(\Gamma)}{b}) \leq -\mathrm{rot}(\Gamma_{\mathsf{f}})(N-\frac{\mathrm{rot}(\Gamma_{\mathsf{f}})}{b}) - \mathrm{rot}(\Gamma_{\bar{\mathsf{f}}})(1-\frac{\mathrm{rot}(\Gamma_{\bar{\mathsf{f}}})}{b}) + N\mathrm{rot}(\Gamma_{\bar{\mathsf{f}}}) - \mathrm{rot}(\Gamma_{\mathsf{f}}).
\end{equation}
Similarly,
\begin{eqnarray*}
&& \mathrm{rot}(\Gamma)(N+1-\frac{\mathrm{rot}(\Gamma)}{b}) - \mathrm{rot}(\Gamma_{\mathsf{f}})(N-\frac{\mathrm{rot}(\Gamma_{\mathsf{f}})}{b}) - \mathrm{rot}(\Gamma_{\bar{\mathsf{f}}})(1-\frac{\mathrm{rot}(\Gamma_{\bar{\mathsf{f}}})}{b}) \\
& = & 2\cdot\mathrm{rot}(\Gamma_{\mathsf{f}})(1-\frac{\mathrm{rot}(\Gamma_{\bar{\mathsf{f}}})}{b}) + N\mathrm{rot}(\Gamma_{\bar{\mathsf{f}}}) - \mathrm{rot}(\Gamma_{\mathsf{f}}) \\
& \geq & N\mathrm{rot}(\Gamma_{\bar{\mathsf{f}}}) - \mathrm{rot}(\Gamma_{\mathsf{f}}).
\end{eqnarray*}
So
\begin{equation}\label{eq-proof-bounds-MOY-tracks-3}
\mathrm{rot}(\Gamma)(N+1-\frac{\mathrm{rot}(\Gamma)}{b}) \geq \mathrm{rot}(\Gamma_{\mathsf{f}})(N-\frac{\mathrm{rot}(\Gamma_{\mathsf{f}})}{b}) + \mathrm{rot}(\Gamma_{\bar{\mathsf{f}}})(1-\frac{\mathrm{rot}(\Gamma_{\bar{\mathsf{f}}})}{b}) + N\mathrm{rot}(\Gamma_{\bar{\mathsf{f}}}) - \mathrm{rot}(\Gamma_{\mathsf{f}}).
\end{equation}

For every $v \in V(\Gamma)$ as depicted in Figure \ref{fig-MOY-vertex}, we have 
\[
\rho_{\Gamma}(v) - \rho_{\Gamma_{\mathsf{f}}}(v) -\rho_{\Gamma_{\bar{\mathsf{f}}}}(v) = \frac{1}{2} (\mathsf{f}(e_1)\bar{\mathsf{f}}(e_2) + \bar{\mathsf{f}}(e_1)\mathsf{f}(e_2)) \geq 0.
\]
Recall that
\[
[v|\Gamma|\mathsf{f}] = \frac{1}{2} (\mathsf{f}(e_1)\bar{\mathsf{f}}(e_2) - \bar{\mathsf{f}}(e_1)\mathsf{f}(e_2)).
\]
This implies that
\begin{equation}\label{eq-proof-bounds-MOY-tracks-4}
-(\rho_{\Gamma}(v) - \rho_{\Gamma_{\mathsf{f}}}(v) -\rho_{\Gamma_{\bar{\mathsf{f}}}}(v)) \leq [v|\Gamma|\mathsf{f}] \leq \rho_{\Gamma}(v) - \rho_{\Gamma_{\mathsf{f}}}(v) -\rho_{\Gamma_{\bar{\mathsf{f}}}}(v).
\end{equation}

Putting \eqref{eq-proof-bounds-MOY-tracks-2}, \eqref{eq-proof-bounds-MOY-tracks-3} and \eqref{eq-proof-bounds-MOY-tracks-4} together, we get that
\begin{eqnarray*}
&& -\mathrm{rot}(\Gamma)(N+1-\frac{\mathrm{rot}(\Gamma)}{b}) - \rho(\Gamma) \\
& \leq & \sigma_{N,1}(\Gamma,\mathsf{f})  -\mathrm{rot}(\Gamma_{\mathsf{f}})(N-\frac{\mathrm{rot}(\Gamma_{\mathsf{f}})}{b})-\rho(\Gamma_{\mathsf{f}})  -\mathrm{rot}(\Gamma_{\bar{\mathsf{f}}})(1-\frac{\mathrm{rot}(\Gamma_{\bar{\mathsf{f}}})}{b})-\rho(\Gamma_{\bar{\mathsf{f}}})
\end{eqnarray*}
and 
\begin{eqnarray*}
&& \mathrm{rot}(\Gamma)(N+1-\frac{\mathrm{rot}(\Gamma)}{b}) + \rho(\Gamma) \\
& \geq & \sigma_{N,1}(\Gamma,\mathsf{f}) + \mathrm{rot}(\Gamma_{\mathsf{f}})(N-\frac{\mathrm{rot}(\Gamma_{\mathsf{f}})}{b}) + \rho(\Gamma_{\mathsf{f}}) + \mathrm{rot}(\Gamma_{\bar{\mathsf{f}}})(1-\frac{\mathrm{rot}(\Gamma_{\bar{\mathsf{f}}})}{b}) + \rho(\Gamma_{\bar{\mathsf{f}}}).
\end{eqnarray*}
But \eqref{eq-bounds-MOY-tracks} is true for $\left\langle \Gamma_{\mathsf{f}} \right\rangle_N$ and $\left\langle \Gamma_{\bar{\mathsf{f}}} \right\rangle_1$. That is,

\begin{eqnarray*}
 \min\deg_q \left\langle \Gamma_{\mathsf{f}} \right\rangle_N & \geq & -\mathrm{rot}(\Gamma_{\mathsf{f}})(N-\frac{\mathrm{rot}(\Gamma_{\mathsf{f}})}{b})-\rho(\Gamma_{\mathsf{f}}),\\
\max\deg_q \left\langle \Gamma_{\mathsf{f}} \right\rangle_N & \leq &  \mathrm{rot}(\Gamma_{\mathsf{f}})(N-\frac{\mathrm{rot}(\Gamma_{\mathsf{f}})}{b}) + \rho(\Gamma_{\mathsf{f}}), \\
\min\deg_q \left\langle \Gamma_{\bar{\mathsf{f}}} \right\rangle_1 & \geq &  -\mathrm{rot}(\Gamma_{\bar{\mathsf{f}}})(1-\frac{\mathrm{rot}(\Gamma_{\bar{\mathsf{f}}})}{b})-\rho(\Gamma_{\bar{\mathsf{f}}}),\\
\max\deg_q \left\langle \Gamma_{\bar{\mathsf{f}}} \right\rangle_1 & \leq &  \mathrm{rot}(\Gamma_{\bar{\mathsf{f}}})(1-\frac{\mathrm{rot}(\Gamma_{\bar{\mathsf{f}}})}{b}) + \rho(\Gamma_{\bar{\mathsf{f}}}).
\end{eqnarray*}

Thus, for any non-zero term $q^{\sigma_{N,1}(\Gamma,\mathsf{f})} \cdot \left\langle \Gamma_{\mathsf{f}} \right\rangle_N \cdot \left\langle \Gamma_{\bar{\mathsf{f}}} \right\rangle_1$ in \eqref{eq-proof-bounds-MOY-tracks-1}, we have

\begin{eqnarray*}
\min\deg_q (q^{\sigma_{N,1}(\Gamma,\mathsf{f})} \cdot \left\langle \Gamma_{\mathsf{f}} \right\rangle_N \cdot \left\langle \Gamma_{\bar{\mathsf{f}}} \right\rangle_1) & \geq & -\mathrm{rot}(\Gamma)(N+1-\frac{\mathrm{rot}(\Gamma)}{b}) - \rho(\Gamma), \\
\max\deg_q (q^{\sigma_{N,1}(\Gamma,\mathsf{f})} \cdot \left\langle \Gamma_{\mathsf{f}} \right\rangle_N \cdot \left\langle \Gamma_{\bar{\mathsf{f}}} \right\rangle_1) & \leq & \mathrm{rot}(\Gamma)(N+1-\frac{\mathrm{rot}(\Gamma)}{b}) + \rho(\Gamma).
\end{eqnarray*}
This shows that \eqref{eq-bounds-MOY-tracks} is true for $\left\langle \Gamma \right\rangle_{N+1}$ and completes the induction.
\end{proof}

\subsection{Proof of Proposition \ref{Prop-colored-MFW-sl-N}} Proposition \ref{Prop-colored-MFW-sl-N} is now a simple corollary of Proposition \ref{prop-bounds-MOY-tracks}. All we need to do is to keep track of the grading shifts used in the definition of the colored $\mathfrak{sl}(N)$ link homology.

\begin{proof}[Proof of Proposition \ref{Prop-colored-MFW-sl-N}]
Denote by $c_1,\dots,c_l$ the $l$ crossings of $B^{(m)}$ and by $\ve_i=\pm1$ the sign of the crossing $c_i$ for $i=1,\dots,l$. We say that replacing a crossing in Figure \ref{crossings-fig} by the shape in Figure \ref{MOY-res-fig} is a $k$-resolution of the crossing. Denote by $\Gamma_{k_1,\dots,k_l}$ the MOY resolution of $B^{(m)}$ obtained by applying the $k_i$ resolution on $c_i$ for $i=1,\dots,l$. Note that $\mathrm{rot}(\Gamma_{k_1,\dots,k_l}) = bm$. Moreover, each crossing $c_i$ in $B^{(m)}$ gives rise to four vertices in $\Gamma_{k_1,\dots,k_l}$. The $\rho_{\Gamma_{k_1,\dots,k_l}}$ value of these four vertices are $\frac{k_i m}{2}$, $\frac{k_i m}{2}$, $\frac{k_i (m-k_i)}{2}$ and $\frac{k_i (m-k_i)}{2}$. So 
\[
\rho(\Gamma_{k_1,\dots,k_l}) = \sum_{i=1}^l k_i(2m-k_i).
\]
Then, by Proposition \ref{prop-bounds-MOY-tracks}, we get that
\begin{eqnarray}
\label{eq-proof-prop11-1} \min\deg_q \left\langle \Gamma_{k_1,\dots,k_l} \right\rangle_N & \geq & -bm(N-m)-\sum_{i=1}^l k_i(2m-k_i),\\
\label{eq-proof-prop11-2} \max\deg_q \left\langle \Gamma_{k_1,\dots,k_l} \right\rangle_N & \leq & bm(N-m) + \sum_{i=1}^l k_i(2m-k_i).
\end{eqnarray}

By \eqref{eq-q-grading-def-+} and \eqref{eq-q-grading-def--}, we know that
\begin{equation}\label{eq-proof-prop11-3} 
\mathsf{s}_{q,N}(B^{(m)};\Gamma_{k_1,\dots,k_l}) = wm(N-m) + \sum_{i=1}^l \ve_ik_i.
\end{equation}
Since the graded dimension of $H_N(\Gamma_{k_1,\dots,k_l})$ is $\left\langle \Gamma_{k_1,\dots,k_l} \right\rangle_N$, using \eqref{eq-proof-prop11-1}, \eqref{eq-proof-prop11-2} and \eqref{eq-proof-prop11-3}, we get
\begin{eqnarray*}
&& \min\deg_q H_N(\Gamma_{k_1,\dots,k_l})\|\mathsf{s}_{h,N}(B^{(m)};\Gamma_{k_1,\dots,k_l})\|\{q^{\mathsf{s}_{q,N}(B^{(m)};\Gamma_{k_1,\dots,k_l})}\} \\
& \geq & (w-b)m(N-m) - \sum_{i=1}^l k_i(2m-k_i-\ve_i) \\
& \geq & (w-b)m(N-m) -lm^2+wm
\end{eqnarray*}
and 
\begin{eqnarray*}
&& \max\deg_q H_N(\Gamma_{k_1,\dots,k_l})\|\mathsf{s}_{h,N}(B^{(m)};\Gamma_{k_1,\dots,k_l})\|\{q^{\mathsf{s}_{q,N}(B^{(m)};\Gamma_{k_1,\dots,k_l})}\} \\
& \leq & (w+b)m(N-m) + \sum_{i=1}^l k_i(2m-k_i+\ve_i) \\
& \leq & (w+b)m(N-m) + lm^2+wm,
\end{eqnarray*}
where we also used the fact that, for any integer $k_i$,
\begin{eqnarray*}
-k_i(2m-k-\ve_i) & \geq & -m^2 +\ve_i m, \\
k_i(2m-k_i+\ve_i) & \leq & m^2+\ve_i m.
\end{eqnarray*}
By Theorem \ref{thm-sl-N-homology}, this implies Proposition \ref{Prop-colored-MFW-sl-N}.
\end{proof}

\end{document}